\documentclass[preprint,12pt]{elsarticle}

\journal{Computers and Mathematics with Applications}

\usepackage{subfigure}
\usepackage{graphicx}
\usepackage{amsmath}
\usepackage{amsthm}
\usepackage{amsfonts}
\usepackage{enumerate}
\usepackage[boxed,lined]{algorithm2e}
\usepackage{url}

\begin{document}

\begin{frontmatter}

%%%%%%%%%%% title

%% use the tnoteref command within \title for footnotes;
%% use the tnotetext command for the associated footnote;
%% use the fnref command within \author or \address for footnotes;
%% use the fntext command for the associated footnote;
%% use the corref command within \author for corresponding author footnotes;
%% use the cortext command for the associated footnote;
%% use the ead command for the email address,
%% and the form \ead[url] for the home page:
%%
%% \title{Title\tnoteref{label1}}
%% \tnotetext[label1]{}
%% \author{Name\corref{cor1}\fnref{label2}}
%% \ead{email address}
%% \ead[url]{home page}
%% \fntext[label2]{}
%% \cortext[cor1]{}
%% \address{Address\fnref{label3}}
%% \fntext[label3]{}

\title{
Subspace-preserving sparsification of matrices with minimal
perturbation to the near null-space. Part I: Basics
}

%% use optional labels to link authors explicitly to addresses:
%% \author[label1,label2]{<author name>}
%% \address[label1]{<address>}
%% \address[label2]{<address>}

\author{Chetan Jhurani}
\ead{chetan.jhurani@gmail.com}

\address{
	Tech-X Corporation\\
	5621 Arapahoe Ave\\
	Boulder, Colorado 80303, U.S.A.
}

%%%%%%%%%%% abstract

\begin{abstract}

This is the first of two papers to describe a matrix sparsification
algorithm that takes a general real or complex matrix as input and produces
a sparse output matrix of the same size.  The non-zero entries in the output
are chosen to minimize changes to the singular values and singular vectors
corresponding to the near null-space of the input.  The output matrix is
constrained to preserve left and right null-spaces exactly.  The sparsity pattern of the
output matrix is automatically determined or can be given as input.

If the input matrix belongs  to a common matrix subspace, we prove that the
computed sparse matrix belongs to the same subspace.  This works without imposing
explicit constraints pertaining to the subspace. This property holds for the
subspaces of Hermitian, complex-symmetric, Hamiltonian, circulant,
centrosymmetric, and persymmetric matrices, and for each of the skew
counterparts.

Applications of our method include computation of reusable sparse
preconditioning matrices for reliable and efficient solution
of high-order finite element systems.  The second paper in
this series~\cite{cite:ssa_2} describes our open-source implementation,
and presents further technical details.

\end{abstract}

\begin{keyword}

Sparsification \sep
Spectral equivalence \sep
Matrix structure \sep
Convex optimization \sep
Moore-Penrose pseudoinverse

\end{keyword}

\end{frontmatter}

%\tableofcontents
%\addtolength{\parskip}{\baselineskip}

%-------------------------------------------------------------------------------

\newcommand{\matspace}[3]{\ensuremath{\mathbb{#1}^{#2 \times #3}} }

\newcommand{\Rnn}  {\matspace{\mathbb{R}}{n}{n}}
\newcommand{\Rmn}  {\matspace{\mathbb{R}}{m}{n}}
\newcommand{\Rmm}  {\matspace{\mathbb{R}}{m}{m}}

\newcommand{\Cnn}  {\matspace{\mathbb{C}}{n}{n}}
\newcommand{\Cmn}  {\matspace{\mathbb{C}}{m}{n}}
\newcommand{\Cmm}  {\matspace{\mathbb{C}}{m}{m}}
\newcommand{\Crn}  {\matspace{\mathbb{C}}{r}{n}}
\newcommand{\Crr}  {\matspace{\mathbb{C}}{r}{r}}

%-------------------------------------------------------------------------------

\newcommand{\vecspace}[2]{\ensuremath{\mathbb{#1}^{#2}} }

\newcommand{\Rn}   {\vecspace{\mathbb{R}}{n}}
\newcommand{\Cn}   {\vecspace{\mathbb{C}}{n}}

\newcommand{\Rm}   {\vecspace{\mathbb{R}}{m}}
\newcommand{\Cm}   {\vecspace{\mathbb{C}}{m}}

\newcommand{\reals} {\ensuremath{\mathbb{R}}}
\newcommand{\complex} {\ensuremath{\mathbb{C}}}

%-------------------------------------------------------------------------------

\newcommand{\norm}[1]{\ensuremath{\left| \left| #1 \right| \right|} }

\newcommand{\abs}[1]{\ensuremath{\left| #1 \right|} }

\newcommand{\pinv}[1]{\ensuremath{{#1}^{\dagger}} }
\newcommand{\inv}[1]{\ensuremath{{#1}^{-1}} }
\newcommand{\h}[1]{\ensuremath{{#1}^{op}} }

\newcommand{\nullsp}[1]{\ensuremath{\mathcal{N}(#1)} }
\newcommand{\rangsp}[1]{\ensuremath{\mathcal{R}(#1)} }

\newcommand{\patsym}[0]{\ensuremath{\mathcal{Z}} }
\newcommand{\pat}[1]{\ensuremath{\mathcal{Z}(#1)} }

\newcommand{\lagr}[0]{\ensuremath{\mathcal{L}} }

\newcommand{\partderiv}[2]{\ensuremath{\frac{\partial #1}{\partial #2}} }

\newcommand{\half}[0]{\ensuremath{\frac{1}{2}} }

%-------------------------------------------------------------------------------

\newcommand{\ordset}[3] {\ensuremath{\left\{ {#1}_{#2} \right\}_{#2 = 1}^{#3}}}

%-------------------------------------------------------------------------------

\newcommand{\Eq}[1]  {Equation~(\ref{#1})}
\newcommand{\Eqs}[1] {Equations~(\ref{#1})}
\newcommand{\Sec}[1] {Section~\ref{#1}}
\newcommand{\Fig}[1] {Figure~\ref{#1}}
\newcommand{\Alg}[1] {Algorithm~\ref{#1}}
\newcommand{\Rem}[1] {Remark~\ref{#1}}
\newcommand{\Prop}[1] {Property~\ref{#1}}
\newcommand{\Thm}[1] {Theorem~\ref{#1}}
\newcommand{\Def}[1] {Definition~\ref{#1}}
\newcommand{\Tab}[1] {Table~\ref{#1}}

%-------------------------------------------------------------------------------

\newtheorem{theorem}{Theorem}[section]
\newtheorem{lemma}[theorem]{Lemma}
\newtheorem{proposition}[theorem]{Proposition}
\newtheorem{corollary}[theorem]{Corollary}
\newtheorem{conjecture}[theorem]{Conjecture}
   
\theoremstyle{definition}
\newtheorem{definition}[theorem]{Definition}
\newtheorem{example}[theorem]{Example}
\newtheorem{examples}[theorem]{Examples}

\theoremstyle{remark}
\newtheorem{remark}[theorem]{Remark}

%\numberwithin{equation}{section}

%-------------------------------------------------------------------------------

\section{Introduction}

We present and analyze a matrix-valued optimization problem formulated to
sparsify matrices while preserving the matrix null-spaces and certain
special structural properties.  Algorithms for sparsification of matrices
have been used in various fields to reduce computational and/or data
storage costs.  Examples and applications are given in 
\cite{
Spielman:2008:GSE:1374376.1374456,
Halko:2011:FSR:2078879.2078881,
Achlioptas:2007:FCL:1219092.1219097,
Arora:2006:FRS:2165230.2165259} and the references therein.  In general, these
techniques lead to small perturbations only in the higher end of the spectrum and don't
respect the null-space or near null-space.  This is intentional and not necessarily
a limitation of these algorithms.

We have different objectives for sparsification.  First, we want to preserve
input matrix structural properties related to matrix subspaces, if it has any.  Being Hermitian or
complex-symmetric are just a couple of examples of such properties.
Second, we want sparsification to minimally perturb the singular values and
both left and right singular vectors in the lower end of the spectrum.
Third, we want to exactly preserve the left and right null-spaces if there
are any.

Our main contribution in this paper is designing an algorithm that fulfills
all the three objectives mentioned above.  We also provide theoretical
results and numerical experiments to support our claims.  The algorithm
presented here is inspired from a less general but a similar sparsification
algorithm introduced in our earlier publication~\cite{cite:cj_tma_bj_jcp}.
That algorithm, in turn, was inspired from the results presented in~\cite{AusBrez}.
Understanding the previous versions of the algorithm is not a prerequisite
for understanding the one presented here.

There has been a related work where one solves an optimization problem to
find a sparse preconditioner~\cite{cite:Greenbaum}.  However, the function to
be minimized there was non-quadratic and non-smooth, the method worked only
for symmetric positive-definite matrices, the sparsity patterns
were supposed to be chosen in advanced, and the method was quite slow.  Our algorithm
does not produce {\em the} optimal sparsified matrix as measured by condition number.
Nonetheless, it does not suffer from these limitations and produces preconditioning
matrices that are competitive in terms of condition number.

Our first objective, of maintaining structure, is important for aesthetic as
well as practical reasons.  If an algorithm produces an asymmetric matrix
by sparsifying a symmetric matrix, it just doesn't look like an
``impartial'' algorithm.  On the practical side, if a property like
symmetry were lost, one would have to resort to data structures and
algorithms for asymmetric matrices which, in general, will be slower than
those for symmetric matrices.  The second and third objectives -- not
disturbing the near null-spaces and the corresponding singular values while also
preserving the null-spaces -- are important when the inverse of the matrix
is to be applied to a vector rather than the matrix itself.  Then, the
inverse, or the pseudo-inverse, of the sparsified matrix can then be used in
place of the inverse of the dense matrix without incurring a large error
due to sparsification.  This also means that the sparsified matrix can be
used to compute a preconditioner with the likelihood that computation with it will
be less expensive due to the sparsity we introduce.

To fulfill these objectives, we design and analyze a matrix-valued,
linearly constrained convex quadratic optimization problem.  It works for a
general real or complex matrix, not necessarily square, and produces a sparse
output matrix of the same size.   A suitable pattern for sparsity is
also computed.  The user can control the sparsity level easily.  We prove that the output
sparse matrix automatically belongs to certain subspaces without imposing any
constraints for them if the input matrix belongs to them.  In particular,
this property holds for Hermitian, complex-symmetric, Hamiltonian,
circulant, centrosymmetric, and persymmetric matrices and also for each of
the skew counterparts.

We briefly mention that the rows and columns of the input matrix should be
well-scaled for the best performance of our sparsity pattern computation method.
A rescaling by pre- and post-multiplication by a diagonal matrix leads to
a well-scaled input matrix~\cite{cite:ruiz}.  We intend to highlight the effects
of scaling to our algorithm in a future publication.

In practice, we are concerned with matrices of size less than a few
thousands.  Applications of our method include computation of reusable
sparse preconditioning matrices for reliable and efficient (in time
consumed and memory used) techniques for solutions of high-order finite
element systems, where the element stiffness matrices are the candidates
for sparsification \cite{cite:cj_tma_bj_jcp}.  We want to stress that the
sparsification process is somewhat expensive because we need to solve a
matrix-valued problem and we want to respect the null-space and the near
null-space rather than the other end of the spectrum.  However, we can
reuse the computed sparse matrices for multiple elements.  We have also
modified the algorithm slightly to speed it up by orders of magnitude and
this is described in the next paper in the series~\cite{cite:ssa_2}.

Intuitively, any sparsification algorithm with objectives like ours will be
more useful when a large fraction of the input matrix entries has small
but non-zero magnitude relative to the larger entries.  Typical high-order finite element matrices have
this feature due to approximate cancellation that happens when high order
polynomials are integrated in the bilinear form.  On the other extreme, Hadamard matrices are poor candidates
for sparsification like ours.

Of course, special techniques in some situations can
generate sparse stiffness matrix without any explicit
sparsification~\cite{cite:Beuchler}.  However, our objective is to be as
general as possible and let the sparsification handle any kind of input
matrix.

The second paper~\cite{cite:ssa_2} describes version 1.0 of TxSSA, which is
a library implemented in C++ and C with interfaces in C++, C, and MATLAB. It
is an implementation
of our sparsification algorithm.  TxSSA is an acronym for Tech-X
Corporation Sparse Spectral Approximation.  The code is distributed under the
BSD 3-clause open-source license and is available here.
\begin{center}
\url{http://code.google.com/p/txssa/}
\end{center}

Here is an outline of the paper.  In~\Sec{sec:opt_problem}, we describe the
optimization problem, provide a rationale for the choices we make, and
prove that it is well-posed.  In~\Sec{sec:lp_pattern}, we describe the
algorithm for computing the sparsity pattern and state some of its
properties.  In~\Sec{sec:bounds}, we prove a few theoretical bounds that
relate the output matrix to the input matrix and other input parameters.
In~\Sec{sec:preserve}, we prove that our algorithm preserves many important
matrix subspaces.  Finally, in~\Sec{sec:num_res}, we show some basic
numerical results.  Detailed results and other practical concerns are
given in the second paper of this series~\cite{cite:ssa_2}.

\section{An optimization problem for sparsification}
\label{sec:opt_problem}

We work with rectangular complex matrices and complex vectors.  Simplifications
to the real or square case are straightforward.

\subsection{Notation}
\label{sec:notation}

Let \Cmn denote the vector space of rectangular complex matrices with $m$
rows and $n$ columns.  Let $\Cm$ denote the vector space of complex vectors
of size $m$. The superscripts `T', `*', and `$\dagger$' denote transpose,
conjugate transpose, and the Moore-Penrose pseudoinverse respectively.  A
bar on top of a quantity implies complex conjugate.  Let \nullsp{\cdot}
denote the null-space and \rangsp{\cdot} denote the range-space of a
matrix.  The default norm used for matrices is the Frobenius norm.  For
vectors the Euclidean norm is the default.  Otherwise norms have specific
subscripts. As usual, the identity matrix of size $n$ is denoted by $I_n$.
The symbol $I$ is used in case the size is easily deducible.

Let $A \in \Cmn$ be the input matrix to be sparsified.  The matrix $A$ can be
expressed using the singular value decomposition (SVD) as $U \Sigma V^*$.
Let $r := \text{rank}(A)$.  The singular values of $A$ are
\ordset{\sigma}{i}{\min(m,n)} sorted in non-increasing order, the right
singular vectors are \ordset{v}{i}{n}, and the left singular vectors are
\ordset{u}{i}{m}.  We divide $U$ and $V$ into two blocks each based on
the rank $r$. We get
$$
A = U \Sigma V^* = \left[ U_1 \; U_2 \right] \Sigma \left[ V_1 \; V_2 \right]^*
$$
where $U_1$ and $V_1$ each have $r$ columns.  Similarly,
$$
\Sigma =
\begin{bmatrix}
\Sigma_r & 0 \\
0 & 0 \\
\end{bmatrix}
$$
where $\Sigma_r$ is of size $r\times r$, diagonal, and invertible unless
$r = 0$.  The columns in $U_1$ and $V_1$ form orthonormal basis for
$\rangsp{A}$ and $\rangsp{A^*}$, respectively.  Similarly, the columns in
$U_2$ and $V_2$ form orthonormal basis for $\nullsp{A^*}$ and $\nullsp{A}$,
respectively.

Using the rank-nullity theorem, we get
$$
p_R := \text{dim}(\nullsp{A}) = n - r
$$
where $p_R$ is the ``right nullity''. Similarly,
$$
p_L := \text{dim}(\nullsp{A^*}) = m - r
$$
where $p_L$ is the ``left nullity''.  Thus,
$\nullsp{A^*} \in \matspace{\complex}{m}{p_L}$ and
$\nullsp{A} \in \matspace{\complex}{n}{p_R}$.

Let $\kappa = \kappa(A) := \norm{A}_2 \norm{\pinv{A}}_2$ be a condition
number for rank-deficient matrices.  Obviously, $\kappa =
\frac{\sigma_1(A)}{\sigma_r(A)} \geq 1$ for non-zero matrices and it
generalizes the usual condition number, which is typically used for square non-singular matrices.

Let $\patsym = \pat{A} \in \Rmn$ be a matrix that denotes a sparsity
pattern corresponding to $A$. It contains zeros and ones only. A zero in
\patsym means the entry at the corresponding position in the output matrix
is fixed to be zero.
A one in \patsym means the entry at corresponding position will be allowed
to vary and can be non-zero.  The pattern matrix is always real even for
complex input matrices.  We do not create separate patterns for real and
imaginary parts of a complex matrix.  The rationale behind this choice will
be explained later in \Rem{rmk:real_pattern} in~\Sec{sec:Zmat}, where we
will also provide a concrete algorithm for computing \pat{A}.

\subsection{A misfit functional}
\label{sec:misfit}
Let $X = X(A) \in \Cmn$ be a matrix produced by sparsification of $A$.
Fix $X_{ij} = 0$ if $\patsym_{ij} = 0$.
When the sparsity pattern is fixed, $X$ belongs to a subspace of $\Cmn.$
 Here, $i$ and $j$ are indices
such that $1 \le i \le m$ and $1 \le j \le n$.

We define an SVD based quadratic ``misfit'' functional $J \in \reals$ to
specify a difference between input matrix $A$ and an arbitrary matrix
$X$.
\begin{definition}
\begin{equation}
\label{eq:J_SVD}
J = J(X; A) :=
\half \sum_{i=1}^{r} \frac{1}{\sigma_i^2} \norm{(X-A) v_i}^2_2 +
\half \sum_{i=1}^{r} \frac{1}{\sigma_i^2} \norm{(X^* - A^*) u_i}^2_2
\end{equation}
\end{definition}
This misfit quantifies the action of the unknown matrix $X$ on the singular
vectors of $A$ and penalizes the differences in near null-space with larger
``weights''.  This is just a symbolic expression for defining $J$.  We shall see in
\Sec{sec:no_svd} that the SVD of $A$ does not have to be computed to compute $J$ and
its derivatives.

We now pose a linearly constrained quadratic optimization problem to
compute $X$.
\begin{align}
\label{eq:min_J}
\min_{X} J(X; A) & \text{ such that } \nonumber\\
&\nullsp{A} \subseteq \nullsp{X}, \nonumber\\
&\nullsp{A^*} \subseteq \nullsp{X^*}, \text{ and } \nonumber\\
&X \text{ has a specified sparsity pattern } \pat{A}
\end{align}
It is readily seen that the constraints are linear homogeneous equality
constraints and $J$ is a quadratic function in the entries of $X$.
Additionally, the misfit $J(X; A)$ is quadratic and bounded below by zero
for any $X$ whether the constraints are imposed or not.  This is one way to
see that it is also a convex function of $X$.  We can rewrite the
minimization problem as follows.
\begin{equation}
\label{eq:min_J_2}
\min_{X} J(X; A) \text{ such that } X V_2 = 0, X^* U_2 = 0, \text{ and }
X_{ij} = 0 \text{ if } (\pat{A})_{ij} = 0.
\end{equation}

\subsection{Rationale for the two-term misfit}

We had posed a similar minimization problem in~\cite{cite:cj_tma_bj_jcp}.
We used it to construct sparse preconditioners for high-order finite
element problems.  However, the input matrix
$A$ had to be real and symmetric.  The output matrix was specifically
constrained to be symmetric.  The misfit there had only one term, rather than two
as shown here, because symmetry of $A$ and $X$ implied that the two terms
were equal and hence one was redundant.  For the same reason only the right
null-space was imposed. The left null-space constraint, or equivalently,
the constraint related to the conjugate-transpose matrix's null-space,
became redundant.  Generalization to asymmetric matrices is one of the
contributions of the current research.

We now work with an arbitrary input matrix (complex, rectangular) and thus
make the setting completely general.  This is done by working with the
singular values and singular vectors rather than eigenvalues and eigenvectors.
We have also avoided explicit imposition of structural constraints, like matrix symmetry.

There are multiple reasons behind the choice of not using explicit
structural constraints.  In many cases, it is indeed possible to impose
linear equality constraints to impose that $X$ belong to a certain subspace
of matrices.  However, explicit specification would lead to specialized
code for any new subspace.  For example, special handling of Hermitian,
complex-symmetric, Hamiltonian, and many other matrices would be needed.
Although imposing such constraints individually is
simple, explicitly imposing such constraints doesn't tell us what to do in
case of matrices without common structures.  Consider a general matrix $A$
such that $\norm{A - A^*} \approx 0.001 \norm{A}$. It is reasonable to
expect and desire a similar small difference in $X$ and $X^*$.  However, in
general, we cannot impose an affine equality constraint to impose this.
Thus, even a tiny perturbation in input that destroys some structure
results in a completely different mathematical and computational problem
(assuming we were actually explicitly imposing $X = X^*$ when $A = A^*$).  These
considerations motivated us to design a new algorithm where no such
constraints are needed.  We now use a two-term misfit, two null-space
related constraint sets, and abandon such explicit constraints even in
cases where it is meaningful.   We show in~\Sec{sec:preserve} that such an algorithm still
preserves matrix structural properties whenever they are present.

\subsection{Avoiding the SVD}
\label{sec:no_svd}
We now show that the misfit in~\Eq{eq:J_SVD} can be expressed in terms of
the Moore-Penrose pseudoinverse of the input matrix $A$. This avoids using
its singular values and singular vectors for actual computation.  We
expressed the misfit using SVD first to show that the misfit penalizes
deviations in the lower end of the spectrum of $A$ and to motivate the
formulation.  See~\Sec{sec:notation} for the notation.

\begin{lemma}
Let $A, Y \in \Cmn$. Let $A = U \Sigma V^*$ be its SVD. Then
\begin{align*}
\sum_{i=1}^{r} \frac{1}{\sigma_i^2} \norm{Y v_i}^2 &= \norm{Y \pinv{A}}^2, \text{ and}\\
\sum_{i=1}^{r} \frac{1}{\sigma_i^2} \norm{Y^* u_i}^2 &= \norm{Y^* \pinv{(A^*)}}^2.
\end{align*}
\end{lemma}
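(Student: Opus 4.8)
The plan is to reduce both identities to the thin SVD of $A$ and then exploit unitary invariance of the Frobenius norm. First I would recall the explicit form of the pseudoinverse in terms of the rank-$r$ blocks introduced in \Sec{sec:notation}: since $A = U \Sigma V^*$ with $\Sigma$ carrying the invertible $r \times r$ block $\Sigma_r$, the Moore-Penrose pseudoinverse is $\pinv{A} = V_1 \inv{\Sigma_r} U_1^*$, where $U_1$ and $V_1$ collect the first $r$ left and right singular vectors and satisfy $U_1^* U_1 = V_1^* V_1 = I_r$. Writing $\pinv{A}$ in this truncated form is what guarantees that the zero singular values never contribute and that the eventual sum runs only to $r$.

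For the first identity, I would substitute this form to obtain $Y \pinv{A} = Y V_1 \inv{\Sigma_r} U_1^*$. Because $U_1$ has orthonormal columns, right-multiplication by $U_1^*$ is norm-preserving: for any $M$, $\norm{M U_1^*}^2 = \text{tr}(M U_1^* U_1 M^*) = \text{tr}(M M^*) = \norm{M}^2$, using $U_1^* U_1 = I_r$. Applying this with $M = Y V_1 \inv{\Sigma_r}$ collapses the right-hand side to $\norm{Y V_1 \inv{\Sigma_r}}^2$. The $m \times r$ matrix $Y V_1 \inv{\Sigma_r}$ has columns $\frac{1}{\sigma_i} Y v_i$ for $i = 1, \dots, r$, and since the squared Frobenius norm equals the sum of the squared Euclidean norms of the columns, this is exactly $\sum_{i=1}^{r} \frac{1}{\sigma_i^2} \norm{Y v_i}^2$, the left-hand side.

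For the second identity, I would simply apply the first identity to the pair $(A^*, Y^*)$ in place of $(A, Y)$. Since $\Sigma$ is real, the SVD of $A^*$ is $V \Sigma U^*$, so its right singular vectors are the $u_i$, its singular values are again the $\sigma_i$, and its rank is again $r$. The first identity then gives $\sum_{i=1}^{r} \frac{1}{\sigma_i^2} \norm{Y^* u_i}^2 = \norm{Y^* \pinv{(A^*)}}^2$, as claimed.

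There is no serious obstacle here. The only points requiring care are writing the pseudoinverse correctly in terms of the truncated blocks, so that only the nonzero singular values appear, and justifying that right-multiplication by $U_1^*$ preserves the Frobenius norm even though $U_1^*$ is not square; this rests on $U_1^* U_1 = I_r$ rather than on $U_1$ being unitary.
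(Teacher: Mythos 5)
Your proposal is correct and follows essentially the same route as the paper: express $\pinv{A}$ via the SVD, use invariance of the Frobenius norm under the (semi-)unitary factor to drop $U^*$ (the paper uses the full $U$, you use $U_1$ with $U_1^*U_1 = I_r$, which is the same idea made slightly more explicit), and then expand the remaining matrix column by column to recover the weighted sum; the second identity is obtained symmetrically. No substantive difference.
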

\begin{proof}
Using the SVD based expression for the Moore-Penrose pseudoinverse
and the fact that the Frobenius matrix norm is invariant under multiplication by a
unitary matrix, it is easily seen that
$$
\norm{Y \pinv{A}}^2 = \norm{Y V \pinv{\Sigma} U^*}^2 = \norm{Y V \pinv{\Sigma}}^2.
$$
The last term can be expanded to a summation using $v_i$,
the columns of $V$, and $\frac{1}{\sigma_i}$, the non-zero diagonal values
of $\pinv{\Sigma}$.  This results in the left hand side of the first
equality.  Thus the first equality holds.  The second equality can be
derived in the same fashion.
\end{proof}
The previous result and the property that $\norm{Y} = \norm{Y^*}$ are used to
express the misfit $J$ from~\Eq{eq:J_SVD} as
\begin{equation}
\label{eq:J_pinv}
J(X; A) = \half \norm{(X - A) \pinv{A}}^2_F + \half \norm{\pinv{A} (X - A)}^2_F
\end{equation}
This avoids the use of singular vectors and singular values and one
avoid using the SVD. The pseudoinverse or a reasonable approximation to
it can be computed by any algorithm that is suitable depending on the
conditioning and any known matrix properties.  By way of example, if the
rank-revealing QR algorithm reliably determines the numerical rank of $A$,
it can be used to compute an accurate pseudoinverse.  On the other hand if
$A$ were symmetric semi-definite too, the pivoted Cholesky will be faster
for computing the pseudoinverse.

This is a good place to mention that we are indeed making the assumption
that the condition number of the input matrix is not ``too large''.  This
assumption is made so that one can avoid SVD and use a faster but less
robust algorithm to compute the pseudoinverse.  More importantly, if the
condition number of $A$ were very large, its invariant spaces will be
highly sensitive to any perturbation applied to $A$.  Sparsifying might be
worthless even if SVD is used.  What is a ``too large'' condition number
can can be quantified after some numerical experimentation and will also
strongly depend on the relative number of zeros introduced via
sparsification.

\subsection{First-order optimality conditions}

We derive the first-order optimality conditions for the minimization
problem using the method of Lagrange multipliers.  Since the quadratic form
in~\Eq{eq:min_J} is convex for all $A$ (see~\Sec{sec:misfit}) and the set of
feasible points is convex, any stationary point will be a local minima.
Hence it is sufficient to consider solutions of the first-order optimality
conditions.

\subsubsection{The Lagrange multipliers}
\label{sec:lagrangian}
We introduce the Lagrange function $\lagr = \lagr(X, \Lambda_R, \Lambda_L,
\Lambda_{\patsym}; A)$, where $X$ is the unknown sparse matrix, $\Lambda_R
\in  \matspace{\complex}{m}{p_R}$ and $\Lambda_L \in
\matspace{\complex}{n}{p_L}$ are matrices of Lagrange multipliers
corresponding to the right and left null-space constraints, respectively.
To impose the sparsity pattern, we use a matrix $\Lambda_{\patsym} \in
\Cmn$, a matrix of Lagrange multipliers and zeros.  If $(\pat{A})_{ij} = 0$,
$(\Lambda_{\patsym})_{ij} = \mu_{ij} \in \complex$ else $(\Lambda_{\patsym})_{ij} = 0$.
\begin{align}
\lagr(X, \Lambda_R, \Lambda_L, \Lambda_{\patsym}; A) :=&
\half \norm{(X - A) \pinv{A}}^2 + \half \norm{\pinv{A} (X - A)}^2  \nonumber \\
+&\; \text{trace}(\Lambda_R^* X V_2)  \nonumber \\
+&\; \text{trace}(\Lambda_L^* X^* U_2)  \nonumber \\
+&\; \sum_{  \substack{ij \text{ where }\\ (\pat{A})_{ij} = 0}   } \mu_{ij} X_{ij} \nonumber 
\end{align}

\subsubsection{Derivative of the misfit}
\label{sec:J_deriv}
Before differentiating $\lagr$, we write the derivative of $J$ with respect
to $X$.  We get
\begin{align*}
\partderiv{J}{X} &= (X-A) \pinv{A} (\pinv{A})^* + (\pinv{A})^* \pinv{A} (X-A)\\
                 &= \left(X \pinv{A} (\pinv{A})^* + (\pinv{A})^* \pinv{A} X\right) - \left(A \pinv{A} (\pinv{A})^* + (\pinv{A})^* \pinv{A} A\right) \\
                 &= \left(X \pinv{A} (\pinv{A})^* + (\pinv{A})^* \pinv{A} X\right) - 2 (\pinv{A})^*
\end{align*}
The simplification done above, in the terms not involving $X$, can be
proved easily using the SVD based expression of the Moore-Penrose
pseudoinverse.

Using the relation between the ``vectorization operation''
and the Kronecker product, we get
$$
\text{vec}\left(\partderiv{J}{X}\right) = \left(
\pinv{A} (\pinv{A})^* \otimes I_m +
I_n \otimes (\pinv{A})^* \pinv{A}
 \right) \text{vec}(X)
$$
where $I_k$ is the identity matrix of size $k$.  We define the $mn \times
mn$ Kronecker sum matrix $\mathcal{A} = \mathcal{A}(A)$ as
\begin{equation}
\label{eq:J_deriv}
\mathcal{A} = \mathcal{A}(A) :=
\pinv{A} (\pinv{A})^* \otimes I_m +
I_n \otimes (\pinv{A})^* \pinv{A}
\end{equation}
Using the spectral properties of Kronecker sums, it is obvious that
$\mathcal{A}(A)$ is always Hermitian positive semi-definite. It is
Hermitian positive definite if and only if $A$ is full-rank.  We skip the
proofs.

\subsubsection{Derivative of the Lagrangian}
\label{sec:first_order_opt}
We differentiate the Lagrangian given in~\Sec{sec:lagrangian} to derive the
first order optimality conditions.  Instead of presenting the intermediate
steps in the derivation we write the final expressions only.  Appendix A
in~\cite{cite:cjphd} shows the intermediate steps for matrix-valued
derivatives.
\begin{align*}
\partderiv{\lagr}{X}         = 0 \implies & X \pinv{A} (\pinv{A})^* + (\pinv{A})^* \pinv{A} X + \Lambda_R V_2^* + U_2 \Lambda_L^* + \Lambda_{\patsym} = 2 (\pinv{A})^*\\
\partderiv{\lagr}{\Lambda_R} = 0 \implies & X V_2 = 0\\
\partderiv{\lagr}{\Lambda_L} = 0 \implies & X^* U_2 = 0\\
\partderiv{\lagr}{\mu_{ij}}  = 0 \implies & X_{ij} = 0 \text{ for } ij \text{ such that } (\pat{A})_{ij} = 0
\end{align*}

\subsection{Existence and global uniqueness of the minimizer}
\label{sec:exist_unique}

We show that the minimization problem posed in~\Eq{eq:min_J} and with the
linear system shown in~\Sec{sec:first_order_opt} has a unique solution for
any input matrix $A$ and any imposed sparsity pattern.  This is true even
if the sparsity pattern has no relation to the input matrix $A$.

\begin{lemma}
\label{lemma:J_min_existence}
A minimizer always exists for the minimization problem posed in
\Eq{eq:min_J} for an arbitrary imposed sparsity pattern.
\end{lemma}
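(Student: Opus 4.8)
The plan is to establish existence of a minimizer for the linearly constrained convex quadratic program in \Eq{eq:min_J}. The key observation is that the misfit $J$ is a convex quadratic functional (bounded below by zero) and that the feasible set -- defined by the homogeneous linear constraints $XV_2 = 0$, $X^*U_2 = 0$, and $X_{ij}=0$ whenever $(\pat{A})_{ij}=0$ -- is a nonempty linear subspace of \Cmn. Nonemptiness is immediate since $X = 0$ satisfies all the constraints. The real obstacle to existence of a minimizer for a convex quadratic is not convexity per se but the possibility that the infimum is not attained, which for an unbounded feasible set can only happen if the objective fails to be coercive along some feasible direction of recession.

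First I would vectorize the problem. Writing $x = \text{vec}(X)$, the misfit becomes $J = \half\, x^* \mathcal{A}\, x - (\text{linear term}) + \text{const}$, where $\mathcal{A} = \mathcal{A}(A)$ is the Kronecker sum matrix of \Eq{eq:J_deriv}, which is Hermitian positive semi-definite for every $A$. The three constraint families are linear in $x$, so the feasible set is $\{x : Cx = 0\}$ for a suitable constraint matrix $C$; let $\mathcal{S}$ denote this subspace and let $N$ be a matrix whose columns span $\mathcal{S}$, so feasible points are exactly $x = Nz$. Substituting, the reduced objective is a convex quadratic in the free variable $z$ governed by the reduced Hessian $N^*\mathcal{A}N$, which is Hermitian positive semi-definite.

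The standard fact I would invoke is that a convex quadratic that is bounded below on an affine (here, linear) subspace always attains its infimum on that subspace; equivalently, for the unconstrained-in-$z$ reduced problem, a minimizer exists precisely because the quadratic is bounded below (indeed $J \ge 0$ everywhere). Concretely, I would appeal to the solvability of the reduced normal equations $N^*\mathcal{A}N z = N^*\,\text{vec}(2(\pinv{A})^*)$: since $J$ is bounded below on $\mathcal{S}$, the linear term of the reduced quadratic must lie in the range of $N^*\mathcal{A}N$ (otherwise one could drive $J \to -\infty$ along a null direction of the reduced Hessian), and therefore a stationary $z$ exists and yields a global minimizer $X = \text{vec}^{-1}(Nz)$. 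I would phrase this as: boundedness below of $J$ plus convexity guarantees that the first-order optimality conditions of \Sec{sec:first_order_opt} are consistent, and any solution is a global minimizer by convexity.

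The main obstacle -- and the only genuine subtlety -- is the attainment issue when $\mathcal{A}$ is merely positive semi-definite (i.e.\ when $A$ is rank-deficient), since then the reduced Hessian may be singular and one must rule out the degenerate case in which the objective decreases without bound along a feasible recession direction. I would dispatch this by noting that $J \ge 0$ is a \emph{global} lower bound, holding on all of \Cmn and a fortiori on $\mathcal{S}$; a convex quadratic bounded below on a subspace cannot have a feasible direction of strict descent to $-\infty$, so the infimum is finite and, being the infimum of a closed convex quadratic bounded below, is attained. This gives existence without any assumption that $\mathcal{A}$ be positive definite, which is exactly why the lemma holds for arbitrary $A$ and arbitrary imposed pattern. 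Uniqueness is deferred to the subsequent result, so here I would stop at existence.
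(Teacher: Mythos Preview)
Your proof is correct and follows essentially the same approach as the paper: the feasible set is a nonempty linear subspace (containing $X=0$), and $J$ is a convex quadratic bounded below by zero, so a minimizer exists. The paper's own proof is considerably more terse---it simply notes nonemptiness and convexity and dispatches the attainment question with a citation to a convex optimization text, whereas you spell out the recession-direction/normal-equations reasoning that underlies that citation.
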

\begin{proof}
The equality constraints for null-spaces and sparsity are linear and
homogeneous.  Thus, the set of feasible solutions is non-empty.  For
example, the zero matrix is a feasible element.  Since the quadratic form
is convex for all $X$ (see~\Sec{sec:misfit}), a minimizer always
exists~\cite{cite:convex_opt}.
\end{proof}

We now have to prove that there is a globally unique minimizer.  We do this first for
the full-rank $A$ and then for a non-zero rank-deficient $A$.

\begin{lemma}
\label{lemma:J_min_unique1}
If $A$ is full-rank, the minimization problem posed in~\Eq{eq:min_J} has a
globally unique minimizer for an arbitrary imposed sparsity pattern.
\end{lemma}
\begin{proof}
As mentioned in~\Sec{sec:J_deriv}, the quadratic form $J$ is strictly
convex on $\Cmn$ if $A$ is full-rank.  In particular, it is strictly convex
on the subspace of those matrices that satisfy the equality constraints.
Since the feasible set is convex and non-empty, the minimizer is
globally unique.
\end{proof}

\begin{lemma}
\label{lemma:J_min_unique2}
If $A$ is non-zero and rank-deficient, the minimization problem posed in~\Eq{eq:min_J}
has a globally unique minimizer for an arbitrary imposed sparsity pattern.
\end{lemma}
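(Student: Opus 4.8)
The plan is to reuse the convexity argument from the full-rank case (Lemma~\ref{lemma:J_min_unique1}), but to localize strict convexity to the feasible subspace. When $A$ is rank-deficient the Kronecker sum $\mathcal{A}(A)$ is only Hermitian positive \emph{semi}-definite (as noted in \Sec{sec:J_deriv}), so $J$ is merely convex, not strictly convex, on all of $\Cmn$, and the full-rank proof does not apply verbatim. Existence is already supplied by Lemma~\ref{lemma:J_min_existence}, and the feasible set is a non-empty linear subspace because all constraints in \Eq{eq:min_J_2} are homogeneous. Hence it suffices to show that $J$ is strictly convex along every nonzero feasible direction; equivalently, that the only feasible matrix in the null-space of the quadratic (Hessian) part of $J$ is the zero matrix.

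First I would suppose, for contradiction, that $X_1 \neq X_2$ are both minimizers and set $D := X_1 - X_2$. Since the constraints are homogeneous, $D$ is feasible, and $X_2 + tD$ is feasible for all $t$. Reading off from \Eq{eq:J_pinv}, the purely quadratic part of $J$ in its argument is
$$
Q(D) := \half \norm{D \pinv{A}}^2 + \half \norm{\pinv{A} D}^2 ,
$$
so that $t \mapsto J(X_2 + tD)$ is a scalar quadratic with leading coefficient $Q(D) \ge 0$ that attains its minimum value at both $t=0$ and $t=1$; a short scalar computation then forces $Q(D) = 0$. Because $Q(D)$ is a sum of two squared Frobenius norms, its vanishing gives $D \pinv{A} = 0$ and $\pinv{A} D = 0$ separately.

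The key step, which I expect to be the main obstacle, is matching these two degeneracy conditions to the range/null-space structure of $A$ and closing the gap with the homogeneous constraints. Writing $\pinv{A} = V_1 \Sigma_r^{-1} U_1^*$ with $\Sigma_r$ invertible (here $r \ge 1$ since $A \neq 0$), the column space of $\pinv{A}$ equals $\rangsp{A^*} = \operatorname{span}(V_1)$, so $D \pinv{A} = 0$ forces $D V_1 = 0$; the feasibility constraint $X V_2 = 0$ applied to $D$ gives $D V_2 = 0$. Since $V = [\,V_1 \; V_2\,]$ is unitary, $D V_1 = 0$ and $D V_2 = 0$ yield $D V = 0$ and hence $D = 0$, contradicting $X_1 \neq X_2$. (The left-hand data $\pinv{A} D = 0$ together with the constraint $X^* U_2 = 0$ gives the identical contradiction through $U_1, U_2$, and the unitarity of $U$, so either null-space constraint alone already suffices.) Once the null-space of the semidefinite Hessian is identified with the matrices annihilating $\rangsp{A^*}$ on the right and $\rangsp{A}$ on the left, the orthogonal-complement argument is routine; note in particular that the sparsity constraint is never used, which is exactly why the conclusion holds for an arbitrary imposed sparsity pattern.
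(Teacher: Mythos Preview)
Your argument is correct, but it differs from the paper's route. The paper shows positive definiteness of the Hessian on the null-space–constrained subspace by exhibiting an explicit eigenbasis: it observes that every feasible $X$ is a combination of the rank-one matrices $u_i v_j^*$ with $1 \le i,j \le r$, and then computes directly that $\mathcal{A}(A)\,\mathrm{vec}(u_i v_j^*) = (\sigma_i^{-2} + \sigma_j^{-2})\,\mathrm{vec}(u_i v_j^*)$, so all eigenvalues on that subspace are strictly positive. Your approach instead identifies the null-space of the semidefinite Hessian via $D\pinv{A} = 0$ and $\pinv{A} D = 0$, reads off $D V_1 = 0$ from the first (using that $\Sigma_r$ is invertible and $U_1^*$ has full row rank), and combines it with the feasibility constraint $D V_2 = 0$ and the unitarity of $V$ to force $D = 0$. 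The paper's computation yields extra quantitative information (the restricted eigenvalues of the Hessian), while your argument is shorter and, as you note, reveals that either one of the two null-space constraints already suffices for uniqueness, a point the paper's eigenbasis argument does not isolate.
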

\begin{proof}
For rank-deficient matrices the Hessian of the quadratic form is merely
positive semi-definite on $\Cmn$.  But we show that it is positive definite
on the subspace of matrices that satisfy the null-space related
constraints.  Showing this will ensure that the Hessian is positive
definite on the subspace of matrices satisfying all the constraints (which
includes sparsity constraints also).  This, in turn, means that there is a
globally unique minimizer even for the rank-deficient case.

Using the fact that $U$ and $V$ are unitary, it can be easily shown that
all matrices $X$ that satisfy the null-space constraints can be written as
a linear combination of $r^2$ basis matrices using arbitrary complex
coefficients
$\left\{\alpha_{ij}\right\}_{i,j=1}^{r}$.
$$
X = \sum_{i=1}^{r} \sum_{j=1}^{r} \alpha_{ij} \; u_i v_j^*
$$
As defined earlier, $u_i$ and $v_j$ are the left and right singular vectors
of $A$ respectively.

Consider the action of the Kronecker sum matrix $\mathcal{A}(A)$
(\Eq{eq:J_deriv}) on the vectorized basis matrices $u_i v_j^*$.
\begin{align*}
\mathcal{A}(A) \text{vec}(u_i v_j^*) &= \left( \pinv{A} (\pinv{A})^* \otimes I_m + I_n \otimes (\pinv{A})^* \pinv{A} \right) \text{vec}(u_i v_j^*)\\
&= \text{vec}\left( u_i v_j^* \pinv{A} (\pinv{A})^* + (\pinv{A})^* \pinv{A} u_i v_j^* \right)\\
&= \text{vec}\left( u_i \frac{u_j^*}{\sigma_j} (\pinv{A})^* + (\pinv{A})^* \frac{v_i}{\sigma_i} v_j^* \right)\\
&= \text{vec}\left( u_i \frac{1}{\sigma_j^2} v_j^* + u_i \frac{1}{\sigma_i^2} v_j^* \right)\\
&= \left( \frac{1}{\sigma_i^2} + \frac{1}{\sigma_j^2} \right) \text{vec}(u_i v_j^*)
\end{align*}
Since both $i,j \le r$, $\sigma_i > 0$ and $\sigma_j > 0$.  Thus,
$\text{vec}(u_i v_j^*)$ are eigenvectors of the Hessian matrix and the
corresponding eigenvalues are positive.  Hence, the Hessian restricted to
the subspace of matrices that satisfy the null-space constraints is
positive definite.
\end{proof}

\begin{theorem}
\label{thm:unique}
A globally unique minimizer exists for the minimization problem posed in
\Eq{eq:min_J} for an arbitrary imposed sparsity pattern.
\end{theorem}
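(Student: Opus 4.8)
The plan is to prove existence and global uniqueness separately, assembling the result from the three preceding lemmas through a case split on the rank of $A$. Existence requires no new work: Lemma~\ref{lemma:J_min_existence} already guarantees a minimizer for every input matrix $A$ and every imposed sparsity pattern, since the feasible set is non-empty (the zero matrix is always feasible), $J$ is convex, and $J$ is bounded below by zero. Hence the only remaining task is to establish global uniqueness in all cases.

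For uniqueness I would split into cases according to the rank $r = \text{rank}(A)$. Every matrix is either full-rank or rank-deficient, and the rank-deficient matrices divide further into the non-zero ones and the zero matrix. If $A$ is full-rank, Lemma~\ref{lemma:J_min_unique1} delivers global uniqueness directly, because $J$ is strictly convex on all of $\Cmn$ and therefore on the convex, non-empty feasible set. If $A$ is non-zero and rank-deficient, Lemma~\ref{lemma:J_min_unique2} applies: there the Hessian $\mathcal{A}(A)$ is only positive semi-definite on $\Cmn$, but it was shown to be positive definite on the subspace of matrices obeying the null-space constraints, which forces strict convexity on the feasible set and hence a unique minimizer.

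The single case not covered by either uniqueness lemma is $A = 0$, so this is the step I would treat with the most care; it is also where the argument must change character. When $A = 0$ we have $r = 0$, so $\nullsp{A} = \Cn$ and $\nullsp{A^*} = \Cm$, and moreover $\pinv{A} = 0$, which makes the objective $J$ vanish identically. Uniqueness therefore cannot come from strict convexity and must instead come entirely from the constraints. Indeed, the containments $\nullsp{A} \subseteq \nullsp{X}$ and $\nullsp{A^*} \subseteq \nullsp{X^*}$ force $X = 0$; equivalently, with $r = 0$ we have $V_2 = V$, so $X V_2 = X V = 0$ and the invertibility of the unitary matrix $V$ gives $X = 0$. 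Thus the feasible set is the singleton $\{0\}$ regardless of the sparsity pattern, and the minimizer is trivially unique. Combining the three cases with the existence result completes the proof. I do not expect any genuine obstacle here, since the substantive analysis was done in the lemmas; the only subtlety is noticing that the two uniqueness lemmas omit the zero-matrix case, which must be disposed of by the constraint structure rather than by positive-definiteness of the Hessian.
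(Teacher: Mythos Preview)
Your proposal is correct and mirrors the paper's own proof: both assemble the theorem from Lemmas~\ref{lemma:J_min_existence}, \ref{lemma:J_min_unique1}, and \ref{lemma:J_min_unique2} together with the observation that $A=0$ forces $X=0$ via the null-space constraints. Your treatment of the zero-matrix case is slightly more detailed than the paper's one-line remark, but the argument is the same.
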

\begin{proof}
This is a consequence of the three lemmas proved earlier $-$ Lemmas
\ref{lemma:J_min_existence}, \ref{lemma:J_min_unique1},
\ref{lemma:J_min_unique2}, and the easily seen fact that if $A = 0$, then
$X = 0$ is the only feasible solution.
\end{proof}

\section{An $L_p$ norm based algorithm to compute the sparsity pattern}
\label{sec:lp_pattern}

In this section, we describe an algorithm for determination of sparsity
pattern in detail, analyze its computational complexity, state and prove
some of its properties, and provide the rationale behind a few subtle
technical issues.

\subsection{Overview of the sparsity pattern algorithm}

We showed in~\Sec{sec:exist_unique} that the minimization problem to
compute a sparse $X$ has a unique solution for any imposed sparsity
pattern.  However, the sparsity determination phase is separate than the
minimization phase and it is important to have a well-defined and universal
method of choosing a sparsity pattern rather than imposing something
arbitrary.

We had presented an $L_1$ norm based algorithm designed for real square
matrices in~\cite{cite:cj_tma_bj_jcp}. Here we generalize for arbitrary
matrices while also using the $L_p$ norm.

As before, we also provide a single parameter $q$, continuously variable in
the interval $[0,1]$, that allows a user to choose anywhere between the two
extremes of very sparse ($q = 0$) and no extra sparsity ($q = 1$). Using a
larger $q$ will provide a better approximation but will be more dense,
whereas a smaller $q$ will provide a worse approximation but lead to fewer
non-zeros in $X$.

In any case, we eliminate only those entries that are small in magnitude
relative to other larger entries.  Intuitively, this makes sense because
eliminating small matrix entries will perturb the matrix and its spectrum
relatively less.

In choosing
entries with large magnitude, our algorithm compares entries in the same
row and column rather than with all other matrix entries.  This choice helps
in obtaining many important properties of the algorithm.  The details and
rationale are provided later in Section~\ref{sec:lp_pat_properties}.

\subsection{Definition of an $L_p$ norm based sparsity for vectors}

Before describing the sparsity pattern algorithm for a matrix, we describe
it for a single vector $x \in \Cm$.  This will be the key ingredient for
the algorithm acting on a matrix.
\begin{definition}
For any $p \in [0, \infty]$, we define the $L_p$ ``norm'' $\norm{x}_p$ for
any vector $x$.
$$
\norm{x}_p = \left\{
     \begin{array}{lcl}
     \displaystyle
       \left(\sum_{i=1}^m \abs{x_i}^p \right)^{\frac{1}{p}} &:& 1 \le p < \infty \\
     \displaystyle
       \max_{i} \abs{x_i} &:& p = \infty\\
     \displaystyle
       \sum_{i=1}^m \abs{x_i}^p &:& 0 <  p < 1\\
     \displaystyle
       \text{number of non-zero } x_i &:& p = 0
     \end{array}
   \right.
$$
\end{definition}
Note that $\norm{x}_p$ is a ``norm'' only when $p \in [1,\infty]$.  It is
not a ``norm'' for $p < 1$.  We simplify the notation and call it a norm
always.

We want to compute a sparsity pattern $\pat{x}$ for $x \in \Cm$. $\pat{x} \in \Rm$.
It contains zeros and ones only. A zero in \patsym means the entry at the
corresponding position is fixed to be zero otherwise it is free to be
modified.

\subsubsection{A combinatorial minimization problem for vector sparsity pattern}

We state below a combinatorial minimization problem for computing $\pat{x}$
so that the number of eliminated entries is maximized while the $L_p$
norm of the eliminated entries is bounded from above.  It is a
combinatorial optimization problem because we can place ones and zeros in
$\pat{x}$ at arbitrary locations.  In addition, the number of non-zeros in
$\pat{x}$ is unknown a priori.

We specify the input parameters -- $p \in [0,\infty]$, $q \in [0,1]$,
and $N \in [0,\norm{x}_0]$.  Here $N$ refers to the minimum number of
non-zeros to be preserved.  It makes sense to have the upper limit for $N$
not greater than $\norm{x}_0$, which is the number of non-zeros present in
$x$. The necessity of $N$ will be discussed in
\Sec{sec:nullity_interaction}.  The symbol `$\circ$' denotes entry-wise
product of two entities.

Given $x, p, q,$ and $N$, compute $\pat{x}$ using
\begin{equation}
\label{eq:Zx_opt}
\begin{array}{l}
\displaystyle \max_{\pat{x}} \norm{x - x \circ \pat{x}}_0 \text{ such that }\\
\displaystyle \qquad (\pat{x})_i = 0 \text{ or } 1, \\[5pt]
\displaystyle \qquad \norm{\pat{x}}_0 \ge N, \text{ and } \\[5pt]
\displaystyle \qquad \norm{x - x \circ \pat{x}}_p \le (1 - q) \norm{x}_p
\end{array}
\end{equation}
If $x = 0$, we define $\pat{x} = 0 \in \Rm$.

Before presenting an algorithm for computing $\pat{x}$, we remark on a few subtle cases.

\begin{remark}
We allow $q$ to take the extreme values, zero or one, in the definition,
but in practical cases it will take an intermediate value usually in $[0.5,1)$.
\end{remark}

The form of the $q$ related inequality condition in our problem is
motivated by two reasons $-$ maintaining a role for $q$ in $p = \infty$
case and better discrimination power for large $p$ case.  We explain this
in the remarks below.

\begin{remark}
One reason we define the $q$ related inequality in terms of $L_p$ norm of
eliminated entries $(\norm{x - x \circ \pat{x}}_p)$ and not in terms of the
norm of preserved entries $(\norm{x \circ \pat{x}}_p)$ has to do with the
large $p$ case, especially $p = \infty$.  Assume $p = \infty$ and $q < 1$
and we impose $\norm{x \circ \pat{x}}_p \ge q \norm{x}_p$ instead.  In this
case, preserving the entry (or entries) with the maximum magnitude would
mean that the norm of preserved entries $\norm{x \circ \pat{x}}_p$ is
greater than $q \norm{x}_p$.  Since this is true for any $q < 1$, that
parameter becomes useless.  This is avoided in~\Eq{eq:Zx_opt} by choosing a
different inequality.
\end{remark}

\begin{remark}
Another reason for using the norm of eliminated entries is when $p$ is
large but not necessarily infinite.  In such a case, moving the smallest
entry in the preserved part to eliminated part will affect the norm of
eliminated part much more.  Thus, comparing the norm of eliminated part
with total norm has more ``discrimination power'', specially for large $p$.
For small $p$, close to one or even less than it, norms of both parts $-$
eliminated and preserved $-$ are affected roughly equally by such a move.
\end{remark}

\subsection{An algorithm to compute sparsity for vectors}
\label{sec:sp_algo_vectors}

We now present an algorithm to compute the sparsity pattern $\pat{x}$ for
the problem posed in~\Eq{eq:Zx_opt}.  See \Alg{alg:Zvec}.
\begin{algorithm}

\label{alg:Zvec}

 \KwData{$x \in \Cm, p \in [0,\infty], q \in [0,1],$ and $N \in [0,\norm{x}_0]$}
 
 \KwResult{Sparsity pattern $\pat{x} \in \Rm$ with $(\pat{x})_i = $ 0 or 1}\vspace{0.5cm}

 $(\pat{x})_i \gets 0$ for $i = 1,\ldots,m$\;
 
 $j \gets num\_zeros(x)$\;
 \If{$j = m$}{
   return\;
 }{}
 
 $x$ $\gets \mbox{abs}(x)$\;
 $ids \gets [1,2,\ldots,m]$\;
 
 Sort $(x,ids)$ pairs in an ascending order with $x_i$ values as the keys\;
 
 \vspace{0.4cm}
 \CommentSty{// Eliminate entries until a threshold is not crossed\\}
 \vspace{0.4cm}
 \While{$j < m - N$}{
 \eIf{$\norm{x(1:j+1)}_p > (1-q)\norm{x}_p$}{
   break\;
   }{
   $id \gets ids(j)$\;
   $(\pat{x})_{id} \gets 1$\;
   $j \gets j + 1$\;
   }
 }
 \vspace{0.4cm}
 \CommentSty{// Preserve the remaining entries\\}
 \vspace{0.4cm}
 \While{$j \le m$}{
   $id \gets ids(j)$\;
   $(\pat{x})_{id} \gets 1$\;
   $j \gets j + 1$\;
 }
\caption{An algorithm to compute sparsity pattern of a given real or
complex vector.  See~\Eq{eq:Zx_opt} for details.}
\end{algorithm}

\begin{remark}
If there are multiple candidate entries in $x$ with the same magnitude that
can lead to a non-unique $\pat{x}$, then any one of them can be used. This
is a ``corner case'' and a simple rule like using all the equal values,
even if fewer than all are sufficient, will break the tie and give a
deterministic algorithm.  In practical problems, we don't expect that the
vector will have a huge number of equal and large entries of exactly equal
magnitude.
\end{remark}

\begin{theorem}
\Alg{alg:Zvec} solves the optimization problem posed in~\Eq{eq:Zx_opt} to
find the sparsity pattern for a given vector.
\end{theorem}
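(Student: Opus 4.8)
The plan is to show that \Alg{alg:Zvec} is a correct greedy solver for the combinatorial problem posed in~\Eq{eq:Zx_opt}, with the whole argument resting on a single exchange/monotonicity observation. First I would dispose of the trivial case $x = 0$, which is handled by the explicit convention $\pat{x} = 0$, and then reduce the problem to the nonzero entries of $x$. A zero entry has zero magnitude, so keeping it eliminated (its default value $(\pat{x})_i = 0$) contributes nothing to the objective $\norm{x - x \circ \pat{x}}_0$ and nothing to the constraint norm $\norm{x - x \circ \pat{x}}_p$. Hence the only genuine decision is which of the nonzero entries to eliminate, and the budget $\norm{x - x \circ \pat{x}}_p \le (1-q)\norm{x}_p$ depends only on the magnitudes of the eliminated nonzeros. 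In this reduced view the objective counts eliminated nonzeros, and maximizing it is the same as minimizing the number of preserved nonzeros while keeping at least $N$ of them $-$ which is exactly the role played by the loop bound $j < m - N$ once $j$ is seeded with the number of zeros.

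The key step is an exchange lemma: for a fixed number $k$ of eliminated nonzero entries, the $L_p$ ``norm'' of the eliminated part is minimized by eliminating the $k$ smallest-magnitude entries. I would prove this by a swap argument $-$ if an optimal eliminated set contained an entry whose magnitude strictly exceeds that of some preserved nonzero entry, exchanging the two leaves $k$ unchanged and does not increase $\norm{\cdot}_p$. This must be checked uniformly over $p \in [0,\infty]$: for $0 < p < \infty$ the quantity $\sum \abs{x_i}^p$ cannot increase under such a swap; for $p = \infty$ the maximum of the eliminated magnitudes cannot increase; and for $p = 0$ the count of eliminated nonzeros is simply unchanged. The immediate corollary is that the prefix quantities $g(k) := \norm{x(1:k)}_p$ taken over the sorted magnitudes are non-decreasing in $k$, so a cardinality $k$ is feasible within budget if and only if $g(k) \le (1-q)\norm{x}_p$, and the feasible cardinalities form a contiguous initial segment $\{0,1,\dots,k^\ast\}$.

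With monotonicity in hand the correctness of the greedy is almost immediate. After sorting the magnitudes in ascending order, the algorithm advances $j$ upward, committing the next-smallest entry to elimination precisely while two conditions hold: eliminating it keeps the running value $\norm{x(1:j+1)}_p$ within $(1-q)\norm{x}_p$, and the cardinality cap $j < m - N$ (equivalently, at least $N$ entries remain preserved) is not violated. Monotonicity of $g$ guarantees that the first time the budget test fails it fails for every larger cardinality as well, so the loop halts exactly at $k^\ast$; the subsequent pass preserves all remaining (larger) entries. The algorithm therefore eliminates the largest feasible number of nonzeros while respecting both the budget and the $N$ constraint, and this is optimal by the exchange lemma. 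I would close by noting that $\norm{\pat{x}}_0 \ge N$ holds by construction and that ties in magnitude are immaterial, as the accompanying remark already observes.

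The step I expect to be the main obstacle is making the exchange lemma airtight across the entire range $p \in [0,\infty]$ at once $-$ in particular the sub-norm regime $0 < p < 1$ together with the endpoints $p = 0$ and $p = \infty$, where $\norm{\cdot}_p$ is not a genuine norm and the monotonicity must be argued directly in terms of the raw quantity $\sum \abs{x_i}^p$ (or the maximum, or the support count) rather than through any norm axioms. A secondary subtlety worth spelling out is the bookkeeping between the zero entries and the count $N$: because $j$ is initialized to the number of zeros, one must verify that the test $j < m - N$ really does translate into ``at least $N$ nonzero entries are preserved,'' so that the returned pattern is feasible for the problem exactly as posed in~\Eq{eq:Zx_opt}.
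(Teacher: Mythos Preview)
Your proposal is correct and follows essentially the same greedy-after-sorting argument as the paper's own proof, though you spell out considerably more detail: the paper simply notes that the algorithm eliminates in sorted order and that eliminating one more entry would violate a constraint, whereas you make the underlying exchange/monotonicity step explicit and uniformly across $p \in [0,\infty]$. Your flagged subtleties (the sub-norm regime and the $N$-bookkeeping) are exactly the gaps the paper's terse proof leaves implicit, so your plan is a strict refinement rather than a different route.
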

\begin{proof}
We start with the three constraints in~\Eq{eq:Zx_opt}.  Obviously, the
output $\pat{x}$ is such that $(\pat{x})_i = 0 \text{ or } 1$.  The second
``while'' loop ensures that $\norm{\pat{x}}_0 \ge N$.  The first while loop
makes sure that $\norm{x - x \circ \pat{x}}_p \le (1 - q) \norm{x}_p$ and
the second one can only decrease $\norm{x - x \circ \pat{x}}_p$.

Hence all the conditions are maintained and the issue is whether the output
pattern maximizes the number of eliminated entries.  It is clear from the
algorithm that if a particular entry is eliminated, all entries smaller
than that must have been eliminated.  Assume that we want to eliminate one
more (non-zero) entry.  Doing this will violate the either
the second or the third constraint in~\Eq{eq:Zx_opt} depending on which of
the two was the active one in limiting the number of ones in $\pat{x}$.
\end{proof}

\subsubsection{Computational complexity for sparsity pattern of vectors}
\label{sec:Zvec_complexity}

For a vector of size $m$, \Alg{alg:Zvec} runs in $O(m
\log(m))$ operations.  This assumes that an $O(m \log(m))$ algorithm is used for
sorting.  Hence the overall complexity is $O(m \log(m))$.

\subsection{Definition of an $L_p$ norm based sparsity for matrices}
\label{sec:Zmat}

We use the sparsity pattern computation algorithm for vectors described in
\Sec{sec:sp_algo_vectors} to compute a sparsity pattern for
matrices.  The algorithm for matrices can be divided into three separate
steps.  The first two steps can be executed independently and thus in any
order.  Assume we have a matrix $A \in \Cmn$, and parameters $p$, $q$.  We
also need the parameters $N_{row}$ and $N_{col}$, which are used to specify
minimum number of non-zeros in each row and each column, respectively.  In
\Sec{sec:nullity_interaction} we will see how $N_{row}$ and $N_{col}$ are
determined and specified a natural way.  Here are the three steps for
computing $\pat{A}$.
\begin{enumerate}
  \item Compute the pattern in each of the $m$ separate rows (treated as vectors).
  \item Compute the pattern in each of the $n$ separate columns (treated as vectors).
  \item The final pattern $\pat{A}$ is the union, a boolean OR, of the row-based pattern and the
  transpose of column-based pattern.
\end{enumerate}
The algorithm presented above is a specific choice that satisfies many
useful theoretical properties.  See~\Sec{sec:lp_pat_properties}.

\begin{remark}
If it is known that the entry-wise absolute value matrix is symmetric, then
we compute either the row or column pattern and take the union with its
transpose.  This shortcut is not necessary but is faster.
Moreover, whether one takes the shortcut or not, the result is that the
pattern is symmetric for matrices that are Hermitian, skew-Hermitian,
or complex-symmetric.
\end{remark}
\begin{remark}
In general, the number of non-zeros will vary in each row and column
depending on the distribution of the magnitudes of the entries.  In this
sense, this is an adaptive method and one does not know the number of
non-zeros a priori for a given set of parameters.
\end{remark}
\begin{remark}
The algorithm works naturally with rectangular and complex matrices.
\end{remark}
\begin{remark}
\label{rmk:real_pattern}
We compute a single pattern matrix for complex matrices rather than
separate independent patterns matrices for real and imaginary parts for two
distinct reasons $-$ one theoretical and one practical. Firstly, when a
single pattern is computed, the pattern remains invariant if the input
matrix is multiplied by a non-zero complex number
(see \Prop{prop:Zmat_homogeneous} in~\Sec{sec:lp_pat_properties}).  This
property will not hold in general when two patterns are computed. Secondly,
storing two patterns for the sparsified $X$, for the real and imaginary
parts, increases the storage costs and one cannot use complex arithmetic
for first-order optimality conditions (see~\Sec{sec:first_order_opt}).
\end{remark}

\subsubsection{Computational complexity for sparsity pattern of matrices}

We discuss the computational complexity for each of the three steps in
\Sec{sec:Zmat} using the computational complexity for sparsity pattern of
vectors in~\Sec{sec:Zvec_complexity}.

\begin{enumerate}
  \item Computing the sparsity pattern for $m$ rows requires
  $m ( n \log(n) )$ operations in total.
  \item Computing the sparsity pattern for $n$ columns requires
  $n ( m \log(m) )$ operations in total.
  \item Computing the union operation is relatively more complex.  Creating
  the transpose of a sparse matrix takes $O(m n)$ operations.  After transposing,
  the union is computed row by row.  Computing
  union of the patterns of two row vectors of size $n$ each
  takes $O(n \log(n))$ operations.  This is done $m$ times.
\end{enumerate}
Hence, the overall cost is still $O(m n (\log(m) + \log(n)))$ which can be
expressed as $O(m n \log(m n))$.

\subsubsection{Interaction of sparsity and null-space}
\label{sec:nullity_interaction}

We mentioned earlier that we can specify the parameters $N_{row}$ and
$N_{col}$ when computing the sparsity pattern of a matrix. The two values
are used to specify minimum number of non-zeros in each row and each
column, respectively.  Here we show why one needs these parameters.

Consider a matrix $X \in \Cmn$ with a given sparsity pattern $\pat{A}$.
When solving the optimization problem posed in~\Sec{sec:misfit}, the
unknown entries in each row of $X$ have to satisfy $p_R$ linear homogeneous
equality constraints (see~\Sec{sec:notation}).  If the number of allowed
non-zero entries in a row of $\pat{A}$ is less than or equal to $p_R =
\text{dim}(\nullsp{A})$, then all the entries must be zero, and thus the
whole row is zero.  This is an undesirable situation.  The same logic
applies to columns and left null-space.  Thus, an algorithm that decides
the sparsity pattern should also keep sufficient number of non-zeros in
each row and each column so that such degenerate matrix is not produced.
In practice, we choose $N_{row} = \min(n, p_R + 1)$ and $N_{col} = \min(m,
p_L + 1)$.

This restriction implies that the null-space dimension must be known before
computing the sparsity pattern.  Another implication is that sparsification
while maintaining null-space constraints cannot be very useful if a matrix
is highly rank-deficient.  In many applications the rank-deficiency is a
small constant independent of matrix size and this is not a huge concern.

\subsection{Properties of the $L_p$ norm based matrix sparsity patterns}
\label{sec:lp_pat_properties}

We enumerate a few important properties of the sparsity patterns
generated by the $L_p$ norm based algorithm of~\Sec{sec:Zmat}.  Let $A \in
\Cmn$, and $\pat{A} \in \Rmn$ denote its sparsity pattern matrix.  The
number of non-zero entries in $\pat{A}$ is expressed as $\abs{\pat{A}}$.
When we want to discuss a specific parameters $p$ and $q$, we write
$\pat{A;p,q}$ instead.

We also need the notion of complex permutation matrices~\cite[Section IV.1]{cite:bhatia}.
\begin{definition}
\label{def:complex_perm}
A complex permutation matrix is a matrix such that it has a only one non-zero
entry in each row and each column and every non-zero entry is a complex
number of modulus 1.
\end{definition}
Complex permutation matrices are always square and unitary.  We use the
letters $P$ and $Q$ to denote them.

It can be shown that $\pat{A}$ satisfies the following properties.  The
proofs are elementary and we skip them.
\begin{enumerate}[{P}-1]
  \item $A_{ij} = 0 \implies (\pat{A})_{ij} = 0$.
  \item $\pat{\alpha A} = \pat{A}$ for $\alpha \in \complex \setminus \{0\}$. \label{prop:Zmat_homogeneous}
  \item $\pat{A^T} = (\pat{A})^T$.
  \item $\pat{A^*} = (\pat{A})^T$.
  \item $\pat{A}$ does not depend on the signs of entries of $A$.
  \item $\pat{P A Q} = \abs{P} \pat{A} \abs{Q}$, where $P, Q$ are any size-compatible complex permutation matrices
  and \abs{\cdot} denotes entry-wise modulus.
  \item $q_1 < q_2 \implies \pat{A;p,q_1} \leq \pat{A;p,q_2}$ entry-wise, where
  $q_1$ and $q_2$ are any two sparsity parameters.
\end{enumerate}

\section{A priori and a posteriori bounds related to the misfit}
\label{sec:bounds}

Our goal in this section is to prove three theoretical bounds 
related to the sparsification algorithm.

\begin{definition}
\label{def:Apq}
For a given $A \in \Cmn$ and the $L_p$ norm sparsity threshold parameter $q$,
$$
A^{pq} := \pat{A;p,q} \circ A.
$$
\end{definition}
Here `$\circ$' denotes entry-wise multiplication.  Thus, $A^{pq} \in \Cmn$
is the matrix which is obtained by setting those entries of $A$ to zero
that correspond to zero values in the pattern $\pat{A;p,q}$.

For proving the bounds below, we restrict $p$ to be in $[1,\infty]$ so that the
standard $L_p$ norm related inequalities are applicable. For $A \in \Cmn$, we will show
that
$$
\norm{A - A^{pq}}_2 \le (m n)^{\frac{C}{2}} (1-q) \norm{A}_2
$$
where
\begin{equation}
\label{eq:Cpower}
C := 1-\frac{1}{p} + \abs{\half - \frac{1}{p}}.
\end{equation}
A few numerical experiments show that this upper bound is not too loose
when $p$ is close to 1.  However, it is quite pessimistic
as $p$ gets larger than 2.  Our purpose is here is solely to show that by sparsifying
each row and each column individually we can bound the perturbation error
for the full matrix.

For a square non-singular matrix $A$ and for $X$ that satisfies the
sparsity constraints, we will show that
$$
\min_{X} J(X;A) \leq m^{1 + 2C} (1-q)^2 \kappa(A)^2.
$$

For an arbitrary $A$ and the corresponding misfit-minimizing $X$, we show
that all the non-zero eigenvalues of $X \pinv{A}$ and $\pinv{A} X$ are
within a circle of radius $\sqrt{2 J_{min}}$ centered at $(1,0)$ in the
complex plane.  Here $J_{min}$ is the minimum misfit value.  Provided
$J_{min} < \half$ and that the left and right nullities of $X$ are not
greater than those of $A$, we show that
$$
\max(\kappa(X \pinv{A}), \kappa(\pinv{A} X)) \leq \frac{1 + \sqrt{2 J_{min}}}{1 - \sqrt{2 J_{min}}}.
$$

\subsection{An upper bound for the perturbation}

\begin{theorem}
\label{thm:bound1}
For $A \in \Cmn$, $p \in [1,\infty]$, $q \in [0,1]$, $A^{pq}$ defined in
\Def{def:Apq}, and $C$ expressed in~\Eq{eq:Cpower},
$$
\norm{A - A^{pq}}_2 \le (m n)^{\frac{C}{2}} (1-q) \norm{A}_2.
$$
\end{theorem}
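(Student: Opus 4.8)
The plan is to write $E := A - A^{pq}$ for the matrix of eliminated entries and to bound its spectral norm by controlling its rows and columns separately, exploiting the per-vector guarantee already built into the sparsity algorithm. First I would record the key structural observation: since $\pat{A;p,q}$ is the boolean union of the row-based and column-based patterns (\Sec{sec:Zmat}), an entry of $E$ is nonzero only where \emph{both} the row and the column sparsification discarded it. Hence, for each fixed row $i$, the nonzeros of $E_{i,:}$ form a subset of the entries discarded by the vector algorithm applied to the row $A_{i,:}$; because dropping entries can only decrease an $L_p$ norm, the third constraint of \Eq{eq:Zx_opt} yields $\norm{E_{i,:}}_p \le (1-q)\norm{A_{i,:}}_p$. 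The identical argument on columns gives $\norm{E_{:,j}}_p \le (1-q)\norm{A_{:,j}}_p$.

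Next I would convert these row/column $L_p$ bounds into bounds on the maximum absolute row sum and column sum of $E$. For a vector $x \in \complex^n$ and $p \in [1,\infty]$, two elementary inequalities suffice: H\"older gives $\norm{x}_1 \le n^{1-\frac{1}{p}}\norm{x}_p$, and the power-mean comparison gives $\norm{x}_p \le n^{\abs{\half - \frac{1}{p}}}\norm{x}_2$ (sharp for $p \le 2$, and merely loose-but-valid for $p \ge 2$, where in fact $\norm{x}_p \le \norm{x}_2$). Applying these to row $i$ and then using that the Euclidean norm of any row of $A$ is at most $\norm{A}_2$, I obtain
$$
\norm{E_{i,:}}_1 \le n^{1-\frac{1}{p}}(1-q)\,\norm{A_{i,:}}_p \le n^{1-\frac{1}{p}}\,n^{\abs{\half-\frac{1}{p}}}(1-q)\,\norm{A}_2 = n^{C}(1-q)\,\norm{A}_2,
$$
with $C$ exactly as in \Eq{eq:Cpower}. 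Taking the maximum over $i$ bounds the maximum absolute row sum by $n^{C}(1-q)\norm{A}_2$, and the symmetric computation over the $n$ columns (each of length $m$) bounds the maximum absolute column sum by $m^{C}(1-q)\norm{A}_2$.

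Finally I would invoke the classical interpolation inequality $\norm{E}_2 \le \sqrt{\norm{E}_1\,\norm{E}_\infty}$, where $\norm{E}_\infty$ and $\norm{E}_1$ denote the maximum absolute row and column sums, to conclude $\norm{E}_2 \le \sqrt{m^{C}n^{C}}(1-q)\norm{A}_2 = (mn)^{C/2}(1-q)\norm{A}_2$, which is the claim. I expect no serious obstacle in the norm manipulations, which are routine; the step deserving the most care is the structural reduction in the first paragraph, namely verifying that the union construction of $\pat{A;p,q}$ really does pass the single-vector guarantee of \Eq{eq:Zx_opt} to every row and column of $E$ simultaneously, and that the floors $N_{row}, N_{col}$ do not interfere (they only \emph{raise} the number of preserved entries, which further shrinks $E$). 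It is also worth flagging that this route is deliberately lossy for $p > 2$ — it uses the factor $n^{1/2 - 1/p}$ in place of the sharp factor $1$ — which is precisely the source of the pessimism for large $p$ noted in the surrounding text.
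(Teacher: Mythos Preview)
Your argument is correct and reaches the same constant $(mn)^{C/2}$, but the path differs from the paper's. The paper keeps the $L_p$ structure at the operator-norm level: from the per-row and per-column bounds it passes to the induced matrix norms $\norm{E}_p$ and $\norm{E}_{p'}$ via the inequalities $n^{1/p-1}\norm{Y}_p \le \max_j \norm{Y(:,j)}_p$ and $m^{1/p-1}\norm{Y}_{p'} \le \max_i \norm{Y(i,:)}_p$, then applies the Riesz--Thorin-type interpolation $\norm{E}_2 \le \norm{E}_p^{1/2}\norm{E}_{p'}^{1/2}$, and only at the end converts $\norm{A}_p\norm{A}_{p'}$ back to $\norm{A}_2^2$ with the factor $(mn)^{|1/2-1/p|}$. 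You instead collapse to the $p=1$ and $p=\infty$ operator norms immediately, by applying H\"older $\norm{x}_1 \le n^{1-1/p}\norm{x}_p$ and the comparison $\norm{x}_p \le n^{|1/2-1/p|}\norm{x}_2$ at the vector level, and then use only the special interpolation $\norm{E}_2 \le \sqrt{\norm{E}_1\norm{E}_\infty}$. Your route is more elementary---it avoids general induced $p$-norms entirely and needs only row/column sums---while the paper's route postpones the lossy step and makes the role of the dual exponent $p'$ explicit. Both arrive at the identical bound, and both incur the same slack for $p>2$, exactly where you flag it.
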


\begin{proof}
We first define the usual dual norm parameter $p'$.
\begin{definition}
The number $p'$ is the {\em dual} of $p$ and is defined by
$$
\frac{1}{p} + \frac{1}{p'} = 1 \implies p' = \frac{p}{p-1}.
$$
\end{definition}
In case of $p = 1$ and $p = \infty$, the appropriate limiting values are
used and $p' = \infty$ and $p' = 1$, respectively.

%m^{-1/p} A_p <= max_i_{p/(p-1)} <= A_p
%use p = s/(s-1)
%p/(p-1) = (s/(s-1))/(s/(s-1) - 1)
%        = s/(s - (s-1)) = s
%m^{-(s-1)/s} A_{s/(s-1)} <= max_i_{s} <= A_{s/(s-1)}
%put s = p
%m^{-(p-1)/p} A_{p/(p-1)} <= max_i_{p} <= A_{p/(p-1)}
%m^{-1+1/p} A_{p'} <= max_i_{p} <= A_{p'}

The way $\pat{A;p,q}$ is computed, by working with each row and each column
(see~\Sec{sec:Zmat}), it is obvious that the following inequalities hold.  We
use the MATLAB notation.
\begin{align*}
\norm{A(i,:) - A^{pq}(i,:)}_p &\le (1 - q) \norm{A(i,:)}_p\\
\norm{A(:,j) - A^{pq}(:,j)}_p &\le (1 - q) \norm{A(:,j)}_p
\end{align*}
Taking maximums on each side, we get the following two inequalities.
\begin{align}
\label{eq:Apq_diff1}
\max_i \norm{A(i,:) - A^{pq}(i,:)}_p &\le (1 - q) \max_i \norm{A(i,:)}_p\\
\label{eq:Apq_diff2}
\max_j \norm{A(:,j) - A^{pq}(:,j)}_p &\le (1 - q) \max_j \norm{A(:,j)}_p
\end{align}
We use the following standard results valid for any matrix $Y \in \Cmn$~\cite[Section 6.3]{cite:highamASNA2}.
\begin{align*}
n^{\frac{1}{p} - 1} \norm{Y}_p    &\le \max_j \norm{Y(:,j)}_p \le \norm{Y}_p\\
m^{\frac{1}{p} - 1} \norm{Y}_{p'} &\le \max_i \norm{Y(i,:)}_p \le \norm{Y}_{p'}.
\end{align*}
Using the substitutions $Y \gets A - A^{pq}$ and $Y \gets A$ separately and
combining the results with~\Eqs{eq:Apq_diff1} and (\ref{eq:Apq_diff2}) we get,
\begin{align*}
n^{\frac{1}{p} - 1} \norm{A - A^{pq}}_p    &\le (1 - q) \norm{A}_p\\
m^{\frac{1}{p} - 1} \norm{A - A^{pq}}_{p'} &\le (1 - q) \norm{A}_{p'}.
\end{align*}

We now make use of the following standard result for $p_1, p_2 \in
[1,\infty]$ and $\theta \in [0,1]$~\cite[Section 6.3]{cite:highamASNA2}.
Let $p_{12}$ be such that
$$
\frac{1}{p_{12}} = \frac{1-\theta}{p_2} + \frac{\theta}{p_1}.
$$
Then,
$$
\norm{Y}_{p_{12}} \le \norm{Y}_{p_1}^{\theta} \norm{Y}_{p_2}^{1 - \theta}.
$$
Choosing $p_1 = p$, $p_2 = p',$ and $\theta = \half$ implies $p_{12} = 2$.
This gives
$$
\norm{A - A^{pq}}^2_{2} \le \norm{A - A^{pq}}_{p} \norm{A - A^{pq}}_{p'} \le
(m n)^{1-\frac{1}{p}} (1-q)^2 \norm{A}_{p} \norm{A}_{p'}.
$$
We now need a bound for $\norm{A}_p \norm{A}_{p'}$ in terms of
$\norm{A}_2$.  Using the results in ~\cite[Section 6.3]{cite:highamASNA2}
again, it can be shown that
$$
\norm{A}_{p} \norm{A}_{p'} \le (m n)^{\abs{\half - \frac{1}{p}}} \norm{A}_2^2.
$$
Using this inequality and taking square roots, we get the result we started to
prove.
$$
\norm{A - A^{pq}}_2 \le (m n)^{\frac{C}{2}} (1-q) \norm{A}_2
$$

%For any vector $x \in \Cm$, the following
%vector norm equivalence relations hold.
%$$
%\begin{array}{lllll}
%\norm{x}_\infty &\le& \norm{x}_p &\le& m^{\frac{1}{p}}\norm{x}_\infty\\
%\norm{x}_p      &\le& \norm{x}_1 &\le& m^{1-\frac{1}{p}}\norm{x}_p
%\end{array}
%$$

\end{proof}

\subsection{An upper bound for the misfit functional}

\begin{theorem}
For a square non-singular matrix $A$ and for $X$ that satisfies the
sparsity constraints,
$$
\min_{X} J(X;A) \leq m^{1 + 2C} (1-q)^2 \kappa(A)^2.
$$
where $C$ is defined in~\Eq{eq:Cpower}.
\end{theorem}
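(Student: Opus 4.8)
The plan is to exploit that the minimum misfit is bounded above by the misfit of any single feasible matrix, and to use $A^{pq}$ from \Def{def:Apq} as that feasible candidate. Since $A$ is square and non-singular we have $r = m = n$, so the blocks $U_2$ and $V_2$ are empty and the null-space constraints $X V_2 = 0$ and $X^* U_2 = 0$ are vacuous. By construction $A^{pq} = \pat{A;p,q} \circ A$ has exactly the sparsity pattern $\pat{A;p,q}$, so it satisfies the only remaining (sparsity) constraint. Hence $A^{pq}$ is feasible and $\min_{X} J(X;A) \le J(A^{pq};A)$, which reduces the theorem to estimating $J(A^{pq};A)$.

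Next I would evaluate $J(A^{pq};A)$ using the pseudoinverse form \Eq{eq:J_pinv}, noting that for a non-singular square matrix $\pinv{A} = \inv{A}$. Writing $E := A - A^{pq}$, so that $A^{pq} - A = -E$ and the Frobenius norm is unaffected by the sign, this gives
$$
J(A^{pq};A) = \half \norm{E \inv{A}}^2 + \half \norm{\inv{A} E}^2.
$$
Each term is bounded by combining submultiplicativity in the mixed form $\norm{BC} \le \norm{B}_2 \norm{C}$ with the standard equivalence $\norm{Y} \le \sqrt{m}\,\norm{Y}_2$ valid for an $m \times m$ matrix $Y$. Applying these to each factor yields $\norm{E \inv{A}} \le \sqrt{m}\,\norm{E}_2 \norm{\inv{A}}_2$ and likewise $\norm{\inv{A} E} \le \sqrt{m}\,\norm{E}_2 \norm{\inv{A}}_2$, so that $J(A^{pq};A) \le m\, \norm{E}_2^2\, \norm{\inv{A}}_2^2$.

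Finally I would invoke \Thm{thm:bound1} with $n = m$, which gives $\norm{E}_2 = \norm{A - A^{pq}}_2 \le m^{C}(1-q)\norm{A}_2$. Substituting and collecting the powers of $m$ produces the bound $m^{1+2C}(1-q)^2\, \norm{A}_2^2\, \norm{\inv{A}}_2^2$, and recognizing $\norm{A}_2^2\, \norm{\inv{A}}_2^2 = \kappa(A)^2$ completes the argument. The only real subtlety is the bookkeeping between the Frobenius norm appearing in $J$ and the spectral norm appearing in \Thm{thm:bound1}: one must choose the submultiplicative inequality so that the spectral bound on $E$ is used directly while each term picks up only a single factor of $\sqrt{m}$, which is exactly what makes the exponent $1 + 2C$ come out as claimed.
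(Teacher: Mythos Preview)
Your proof is correct and follows essentially the same route as the paper: bound $\min_X J(X;A)$ by $J(A^{pq};A)$ using feasibility of $A^{pq}$, pass from Frobenius to spectral norm picking up a single factor of $\sqrt{m}$ per term, apply submultiplicativity, and then invoke \Thm{thm:bound1} with $n=m$. The only cosmetic difference is that the paper first uses $\norm{\cdot}_F \le \sqrt{m}\,\norm{\cdot}_2$ on the product and then spectral submultiplicativity, whereas you use the mixed inequality $\norm{BC}_F \le \norm{B}_2 \norm{C}_F$ and then bound the remaining Frobenius factor; both orderings yield exactly the same $m^{1+2C}$ exponent.
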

\begin{proof}

For $n = m$, \Thm{thm:bound1} gives
$$
\norm{A - A^{pq}}_2 \le m^C (1-q) \norm{A}_2.
$$
Let $X$ be the sparse matrix minimizing the misfit functional $J(X;A)$.
The only constraints on $X$ are due to sparsity.  The matrix $A^{pq}$
satisfies the sparsity constraint as well.  We want to find an upper bound
on $J(X;A)$ in terms of $p$, $q$, and $A$ given that $\norm{A - A^{pq}}_2$
is bounded.
\begin{eqnarray*}
J(X;A) \leq J(A^{pq};A) &=& \half \norm{(A^{pq} - A) \inv{A}}_F^2 + \half \norm{\inv{A} (A^{pq} - A)}_F^2\\
                     &\leq& \frac{m}{2} \left( \norm{(A^{pq} - A) \inv{A}}_2^2 + \norm{\inv{A} (A^{pq} - A)}_2^2 \right)\\
                     &\leq& \frac{m}{2} \left( \norm{A^{pq} - A}_2^2 \norm{\inv{A}}_2^2 + \norm{\inv{A}}_2^2 \norm{A^{pq} - A}_2^2 \right)\\
                        &=& m \norm{A^{pq} - A}_2^2 \norm{\inv{A}}_2^2\\
                     &\leq& m^{1 + 2C} (1-q)^2\; \norm{A}_2^2 \norm{\inv{A}}_2^2\\
                        &=& m^{1 + 2C} (1-q)^2 \kappa(A)^2
\end{eqnarray*}
Taking minimum with respect to $X$ proves the result.
\end{proof}

This analysis is useful in showing that apart from the sparsity parameter
$q$, the condition number of $A$ will likely play an important role too.
That is to say that if $A$ is not very well conditioned, one would
require a denser $X$, by using a larger $q$, to obtain a smaller misfit
between $X$ and $A$.

\begin{remark}
Note that we first bound the Frobenius norm differences in terms of
spectral norm differences and then use the submultiplicative norm property
instead of doing it the other order.  This is because doing it the other
way would have introduced another power of $m$ and led to a less tighter
bound.
\end{remark}

\begin{remark}
The bound proved above is pessimistic for two reasons.  First, it
does use the fact that $X$ is the minimizer but it does not quantify the
effect of minimization.  Second, it makes the worst case assumptions in
using the matrix norm equivalence relations.
\end{remark}

\subsection{A posteriori bounds related to clustering and conditioning}

We now relate the minimum value of the misfit functional $J$ to generalized
condition numbers and eigenvalues of $X \pinv{A} \in \Cmm$ and $\pinv{A} X
\in \Cnn$, where $X$ is the misfit minimizing matrix.  Note that $X$ also
satisfies the null-space related constraints.  These bounds do not depend
on any specific chosen sparsity pattern.  They are pessimistic bounds and
are useful for qualitative understanding.

Define $J_{min}$ to be minimum value of misfit for any fixed chosen sparsity
pattern.
\begin{theorem}
\label{thm:Jmin_radius}
All the non-zero eigenvalues of $X \pinv{A}$ and $\pinv{A} X$ are within a
circle of radius $\sqrt{2 J_{min}}$ centered at $(1,0)$ in the complex
plane.
\end{theorem}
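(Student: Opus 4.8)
The plan is to use that the optimal value $J_{min}$ simultaneously controls both Frobenius-norm terms of the misfit in \Eq{eq:J_pinv}. Since each term is non-negative, the minimizer $X$ satisfies
$$
\norm{(X-A)\pinv{A}}_F \le \sqrt{2 J_{min}}
\quad\text{and}\quad
\norm{\pinv{A}(X-A)}_F \le \sqrt{2 J_{min}}.
$$
The key idea is the splitting $X\pinv{A} = A\pinv{A} + (X-A)\pinv{A}$, where $A\pinv{A} = U_1 U_1^*$ is the orthogonal projector onto $\rangsp{A}$. On the subspace that carries the non-zero spectrum of $X\pinv{A}$ this projector acts as the identity, so the eigenvalues of $X\pinv{A}$ will be seen to differ from $1$ exactly by eigenvalues of the small matrix $(X-A)\pinv{A}$, whose size is governed by the bound above.

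First I would take a non-zero eigenvalue $\lambda$ of $X\pinv{A}$ with eigenvector $w \ne 0$, so $X\pinv{A}\,w = \lambda w$. Since $\lambda \ne 0$, we may write $w = \lambda^{-1} X(\pinv{A} w)$, so $w \in \rangsp{X}$. The left null-space constraint $X^* U_2 = 0$ forces $\rangsp{X} \subseteq \rangsp{A} = \rangsp{U_1}$, hence $A\pinv{A}\,w = U_1 U_1^* w = w$. Subtracting,
$$
(X-A)\pinv{A}\,w = X\pinv{A}\,w - A\pinv{A}\,w = (\lambda - 1)\,w,
$$
so $\lambda - 1$ is an eigenvalue of $(X-A)\pinv{A}$.

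To finish I would bound the spectral radius of the error matrix by its Frobenius norm,
$$
\abs{\lambda - 1} \le \rho\!\left((X-A)\pinv{A}\right) \le \norm{(X-A)\pinv{A}}_2 \le \norm{(X-A)\pinv{A}}_F \le \sqrt{2 J_{min}},
$$
which places every non-zero $\lambda$ inside the stated disc. The statement for $\pinv{A} X$ follows by the symmetric argument using the second term: a non-zero eigenvalue $\mu$ has eigenvector $z = \mu^{-1}\pinv{A}(Xz) \in \rangsp{\pinv{A}} = \rangsp{V_1}$, on which $\pinv{A} A = V_1 V_1^*$ acts as the identity, so $\pinv{A}(X-A)\,z = (\mu - 1)\,z$ and $\abs{\mu - 1} \le \norm{\pinv{A}(X-A)}_F \le \sqrt{2 J_{min}}$.

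The only place the constraints on $X$ genuinely enter — and the step to handle with care — is the containment $\rangsp{X} \subseteq \rangsp{A}$ used for $X\pinv{A}$; without it the projector $A\pinv{A}$ would not collapse to the identity on $w$ and the clean identity $(X-A)\pinv{A}\,w = (\lambda-1)w$ would break down. For $\pinv{A} X$ the corresponding containment is automatic, because any eigenvector for a non-zero eigenvalue already lies in $\rangsp{\pinv{A}} = \rangsp{V_1}$. Everything else reduces to the elementary observation that a non-zero eigenvalue of a product drives its eigenvector into the range of the leading factor, together with the norm chain $\rho(\cdot) \le \norm{\cdot}_2 \le \norm{\cdot}_F$.
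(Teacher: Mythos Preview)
Your argument is correct and takes a genuinely different route from the paper. The paper first uses \emph{both} null-space constraints to write $X = U_1 Y V_1^*$ for some $Y \in \Crr$, reduces $J_{min}$ to $\half\norm{Y\inv{\Sigma_r}-I}_F^2 + \half\norm{\inv{\Sigma_r}Y-I}_F^2$, and then reads off that the eigenvalues of $Y\inv{\Sigma_r}$ and $\inv{\Sigma_r}Y$ (which coincide with the non-zero eigenvalues of $X\pinv{A}$ and $\pinv{A}X$) lie in the stated disc. You instead work directly with an eigenvector $w$ for a non-zero eigenvalue, use that $w$ lands in the range of the leading factor, and invoke the projector identities $A\pinv{A}=U_1U_1^*$ and $\pinv{A}A=V_1V_1^*$ to obtain $(X-A)\pinv{A}\,w=(\lambda-1)w$ (respectively $\pinv{A}(X-A)\,z=(\mu-1)z$). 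Your approach is more elementary in that it avoids introducing the auxiliary $Y$, and it exposes an asymmetry the paper's reduction hides: only the \emph{left} null-space constraint $X^*U_2=0$ is actually needed for the $X\pinv{A}$ statement, and neither null-space constraint is needed for $\pinv{A}X$. The paper's reduction, on the other hand, packages everything into the invertible $r\times r$ block $Y\inv{\Sigma_r}$, which it immediately reuses in the proof of the next theorem on $\kappa(X\pinv{A})$.
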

\begin{proof}
As shown in~\Sec{sec:notation}, the matrix $A$ can be factorized as $U_1
\Sigma_r V_1^*$.  Since $X$ satisfies the null-space related constraints,
it can be expressed as $U_1 Y V_1^*$, where $Y \in \Crr$.  We can simplify
$J_{min}$.
$$
J_{min} = \half ||Y \inv{\Sigma_r} - I||_F^2 + \half ||\inv{\Sigma_r} Y - I||_F^2
$$
Thus, $||Y \inv{\Sigma_r} - I||_F$ and $||\inv{\Sigma_r} Y - I||_F$ are
both less than or equal to $\sqrt{2 J_{min}}$.  We use the fact that the
magnitude of each eigenvalue of a matrix is less than the Frobenius norm.
Thus, all eigenvalues of $Y \inv{\Sigma_r} - I$ and $\inv{\Sigma_r} Y - I$
have a magnitude less than $\sqrt{2 J_{min}}$.  This means all eigenvalues
of $Y \inv{\Sigma_r}$ and $\inv{\Sigma_r} Y$ are within a circle of radius
$\sqrt{2 J_{min}}$ around $(1,0)$.

It is trivial to show that ignoring any zero eigenvalues, the eigenvalues of $X \pinv{A}$ and $\pinv{A} X$
are same as the eigenvalues of $Y \inv{\Sigma_r}$ and $\inv{\Sigma_r} Y$,
respectively.  This completes the proof.
\end{proof}

The second result relates generalized condition numbers of
$X \pinv{A}$ and $\pinv{A} X$ with the value of $J_{min}$.
\begin{theorem}
Let $X$ be the minimizing matrix that satisfies all the constraints in
\Eq{eq:min_J} and $J_{min}$ be the minimum value.  Provided $J_{min} <
\half$ and that the left and right nullities of $X$ are not greater than
those of $A$,
$$
\max(\kappa(X \pinv{A}), \kappa(\pinv{A} X)) \leq \frac{1 + \sqrt{2 J_{min}}}{1 - \sqrt{2 J_{min}}}
$$
\end{theorem}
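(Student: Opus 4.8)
The plan is to reduce the statement to the $r \times r$ ``core'' matrices that already appeared in the proof of \Thm{thm:Jmin_radius} and then convert the Frobenius bounds established there into singular-value bounds. Writing $A = U_1 \Sigma_r V_1^*$ and, exactly as in that proof, $X = U_1 Y V_1^*$ with $Y \in \Crr$, a direct computation using $U_1^* U_1 = I_r$ and $V_1^* V_1 = I_r$ gives $X \pinv{A} = U_1 (Y \inv{\Sigma_r}) U_1^*$ and $\pinv{A} X = V_1 (\inv{\Sigma_r} Y) V_1^*$. Since $U_1$ and $V_1$ have orthonormal columns, writing a thin SVD of the inner factor shows that conjugation by them preserves all non-zero singular values; hence the non-zero singular values of $X \pinv{A}$ coincide with those of $Y \inv{\Sigma_r}$, and those of $\pinv{A} X$ with those of $\inv{\Sigma_r} Y$. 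Because $\kappa(\cdot)$ for a rank-deficient matrix is the ratio of the largest to the smallest \emph{non-zero} singular value, it suffices to bound $\kappa(Y \inv{\Sigma_r})$ and $\kappa(\inv{\Sigma_r} Y)$.

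Next I would import the two inequalities derived inside the proof of \Thm{thm:Jmin_radius}, namely $\norm{Y \inv{\Sigma_r} - I} \le \sqrt{2 J_{min}}$ and $\norm{\inv{\Sigma_r} Y - I} \le \sqrt{2 J_{min}}$ in the Frobenius norm. Setting $M := Y \inv{\Sigma_r}$ and $E := M - I$, and using $\norm{E}_2 \le \norm{E} \le \sqrt{2 J_{min}}$, the standard singular-value perturbation (Weyl) inequalities for $I + E$ give $\sigma_{\max}(M) \le 1 + \norm{E}_2 \le 1 + \sqrt{2 J_{min}}$ and $\sigma_{\min}(M) \ge 1 - \norm{E}_2 \ge 1 - \sqrt{2 J_{min}}$. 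The hypothesis $J_{min} < \half$ makes $\sqrt{2 J_{min}} < 1$, so the lower bound is strictly positive. Dividing the two bounds yields
$$
\kappa(M) = \frac{\sigma_{\max}(M)}{\sigma_{\min}(M)} \le \frac{1 + \sqrt{2 J_{min}}}{1 - \sqrt{2 J_{min}}},
$$
and the identical argument applied to $\inv{\Sigma_r} Y$ finishes the proof after taking the maximum of the two quantities.

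The step that requires genuine care, and where the nullity hypothesis enters, is ensuring that $\sigma_{\min}(M)$ really is the smallest \emph{non-zero} singular value rather than an artificial zero. The null-space constraints alone only force the nullities of $X$ to be at least those of $A$; combined with the assumption that they are not greater, the nullities must be exactly equal, so $\text{rank}(X) = r$ and hence $\text{rank}(Y) = r$, i.e.\ the $r \times r$ matrix $Y$ is invertible. Consequently $M$ is invertible, $\sigma_{\min}(M) > 0$, and the perturbation lower bound $1 - \sqrt{2 J_{min}}$ is a legitimate bound on the smallest non-zero singular value. Without this hypothesis $Y$ could be singular, $\kappa(M)$ would be infinite, and the bound would fail; confirming that the two hypotheses together close exactly this gap is the main obstacle, with the remaining estimates being routine norm inequalities.
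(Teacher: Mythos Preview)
Your proposal is correct and follows essentially the same route as the paper: reduce to the $r\times r$ core via $X = U_1 Y V_1^*$, use the Frobenius bounds $\norm{Y\inv{\Sigma_r}-I}\le\sqrt{2J_{min}}$ and $\norm{\inv{\Sigma_r}Y-I}\le\sqrt{2J_{min}}$, invoke the nullity hypothesis to make $Y$ invertible, and then bound $\sigma_{\max}$ and $\sigma_{\min}$ of $M=Y\inv{\Sigma_r}$. The only cosmetic difference is that you package the singular-value bounds by citing Weyl's perturbation inequality for $I+E$, whereas the paper derives the same inequalities by hand from the Rayleigh-quotient definitions $\sigma_{\max}(M)=\max_{x\neq 0}\norm{Mx}/\norm{x}$ and $\sigma_{\min}(M)=\min_{x\neq 0}\norm{Mx}/\norm{x}$ together with the triangle and reverse-triangle inequalities; the content is identical.
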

\begin{proof}
We show that
$$
\kappa(X \pinv{A}) \leq \frac{1 + \sqrt{2 J_{min}}}{1 - \sqrt{2 J_{min}}}.
$$
The proof for $\kappa(\pinv{A} X)$ is similar and combining the two will
bound their maximum and prove the theorem.

Using the notation and steps in the proof of \Thm{thm:Jmin_radius}, we get
$$
\kappa(X \pinv{A}) = \frac{\sigma_{\max}(Y \inv{\Sigma_r})}{\sigma_{\min}(Y \inv{\Sigma_r})}.
$$
Since the left and right nullities of $X$ are not greater than those of
$A$, $Y$ and hence $Y \inv{\Sigma_r}$ are full-rank matrices. This means
the denominator above is non-zero.

We bound the numerator from above.
\begin{eqnarray*}
\sigma_{\max}(Y \inv{\Sigma_r}) &=& \max_{x \neq 0} \frac{\norm{Y \inv{\Sigma_r} x}}{\norm{x}}\\
                                &=& \max_{x \neq 0} \frac{\norm{(I-(I-Y \inv{\Sigma_r}))x}}{\norm{x}}\\
                             &\leq& \max_{x \neq 0} \frac{\norm{x} + \norm{(I-Y \inv{\Sigma_r}) x}}{\norm{x}}\\
                                &=& 1 + \norm{I-Y \inv{\Sigma_r}}_2\\
                             &\leq& 1 + \norm{I-Y \inv{\Sigma_r}}_F\\
                             &\leq& 1 + \sqrt{2 J_{min}}.
\end{eqnarray*}

We bound the denominator from below assuming $\sqrt{2 J_{min}} < 1$.
\begin{eqnarray}
\label{eq:sigma_min_bound}
\nonumber
\sigma_{\min}(Y \inv{\Sigma_r}) &=& \min_{x \neq 0} \frac{\norm{Y \inv{\Sigma_r}x}}{\norm{x}}\\ \nonumber
                                &=& \min_{x \neq 0} \frac{\norm{(I - (I - Y \inv{\Sigma_r}))x}}{\norm{x}}\\ \nonumber
                             &\geq& \min_{x \neq 0} \frac{ \abs{\; \norm{x} - \norm{(I - Y \inv{\Sigma_r}) x} \; } }{\norm{x}}\\ 
                                &=& \min_{x \neq 0} \abs{1 - \frac{\norm{(I - Y \inv{\Sigma_r}) x}}{\norm{x}}}
\end{eqnarray}
The following chain of inequalities hold.
$$
\frac{\norm{(I - Y \inv{\Sigma_r}) x}}{\norm{x}} \leq \norm{I - Y \inv{\Sigma_r}}_2 \leq
\norm{I - Y \inv{\Sigma_r}}_F \leq \sqrt{2 J_{min}} < 1
$$
This means the~\Eq{eq:sigma_min_bound} can be simplified as follows.
\begin{eqnarray*}
\min_{x \neq 0} \abs{1 - \frac{\norm{(I - Y \inv{\Sigma_r}) x}}{\norm{x}}} &=&
1 - \max_{x \neq 0} \frac{\norm{(I - Y \inv{\Sigma_r}) x}}{\norm{x}}\\
&=& 1 - \norm{I - Y \inv{\Sigma_r}}_2\\
&\geq& 1 - \norm{I - Y \inv{\Sigma_r}}_F\\
&\geq& 1 - \sqrt{2 J_{min}}.
\end{eqnarray*}
Hence
$$
\kappa(X \pinv{A}) \leq \frac{1 + \sqrt{2 J_{min}}}{1 - \sqrt{2 J_{min}}}
$$
and we are done.
\end{proof}

\section{Subspace preserving sparsification}
\label{sec:preserve}
We now show that the output sparse matrix automatically belongs to certain
subspaces, without imposing special constraints for them, if the input
matrix belongs to them. In particular, this property holds for Hermitian,
complex-symmetric, Hamiltonian, circulant, centrosymmetric, and
persymmetric matrices and also for each of the skew counterparts.
Obviously, proving this will require that all the components of the
algorithm -- the sparsity constraints, the null-space related constraints,
the form of the misfit functional -- have features that make the
preservation of subspaces possible.  We first prove the results for general
permutation transformations and then apply the results to specific complex
permutation matrices in~\Sec{sec:invariance_special}.

\subsection{Invariance of the general minimization problem}

We prove a few intermediate results. Here is the required notation.  Let $\alpha \in
\complex, \abs{\alpha} = 1$ be a constant, $P,Q \in \Cmm$ or $\Cnn$ be
complex permutation matrices, and $Y \in \Cmn$.  Let $\h{Y} = Y$ or $Y^*$
or $Y^T$, where $op$ is a placeholder and means operation.
Then $\h{(\h{Y})} = Y$, $\norm{\h{Y}}_F = \norm{Y}_F$, $\pinv{(\alpha P
\h{A} Q)} = (1/\alpha) Q^* \pinv{(\h{A})} P^* = \bar{\alpha} Q^* \pinv{(\h{A})} P^*$, and $\pinv{(\h{A})} =
\h{(\pinv{A})}$.  If $op$ denotes transpose or conjugate transpose $\h{(Y_1
Y_2)} = \h{Y_2} \h{Y_1}$.  The proofs of each of these equalities is
trivial.
\begin{definition}
\label{def:fmap}
Let $f(Y) = \alpha P \h{Y} Q$ for some given $\alpha, P,$ and $Q$ where $\abs{\alpha} = 1$ and $P$ and $Q$
are appropriately sized complex permutation matrices.
\end{definition}
\begin{lemma}
\label{lem:J_invariance}
$J(X;A) = J(f(X);f(A))$, where $J(X;A)$ is the misfit functional defined in
\Eq{eq:J_pinv}.
\end{lemma}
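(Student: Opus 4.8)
The goal is to show that the misfit functional $J$ is invariant under the transformation $f(Y) = \alpha P \h{Y} Q$, i.e. that $J(X;A) = J(f(X);f(A))$. My plan is to work directly from the pseudoinverse-based expression in \Eq{eq:J_pinv}, namely
$$
J(X;A) = \half \norm{(X - A) \pinv{A}}^2_F + \half \norm{\pinv{A} (X - A)}^2_F,
$$
and to substitute $f(X)$ for $X$ and $f(A)$ for $A$ throughout, then simplify each of the two Frobenius-norm terms back to the original using the algebraic identities collected just before \Def{def:fmap}.

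First I would compute $f(X) - f(A)$. Since $f$ is linear in its argument up to the operation $op$ and the operation $\h{\cdot}$ distributes over subtraction, we have $f(X) - f(A) = \alpha P (\h{X} - \h{A}) Q = \alpha P \h{(X-A)} Q$. Next I would compute $\pinv{(f(A))}$ using the stated identity $\pinv{(\alpha P \h{A} Q)} = \bar{\alpha} Q^* \pinv{(\h{A})} P^*$, together with $\pinv{(\h{A})} = \h{(\pinv{A})}$, giving $\pinv{(f(A))} = \bar{\alpha} Q^* \h{(\pinv{A})} P^*$.

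Then I would multiply these out for the first term, $(f(X) - f(A)) \pinv{(f(A))}$. Plugging in yields $\alpha \bar{\alpha} P \h{(X-A)} Q Q^* \h{(\pinv{A})} P^*$. Since $\abs{\alpha} = 1$ we have $\alpha \bar{\alpha} = 1$, and since $Q$ is a complex permutation matrix it is unitary so $Q Q^* = I$. This collapses the expression to $P \h{(X-A)} \h{(\pinv{A})} P^*$. The key observation is that when $op$ is transpose or conjugate transpose, $\h{(X-A)} \h{(\pinv{A})} = \h{(\pinv{A} (X-A))}$ by the reversal identity $\h{(Y_1 Y_2)} = \h{Y_2}\h{Y_1}$, so this term becomes $P \h{(\pinv{A}(X-A))} P^*$; when $op$ is the identity it is already $P (X-A)\pinv{A} P^*$. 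Finally, taking the Frobenius norm and using $\norm{\h{Y}}_F = \norm{Y}_F$ along with unitary invariance of the Frobenius norm under left-multiplication by $P$ and right-multiplication by $P^*$, the whole factor reduces to either $\norm{(X-A)\pinv{A}}_F$ or $\norm{\pinv{A}(X-A)}_F$. The same manipulation applied to the second term $\pinv{(f(A))}(f(X)-f(A))$ produces the complementary one of these two norms, so the sum of the two squared norms is preserved, possibly with the two terms exchanged. Since $J$ is the symmetric average of exactly these two squared norms, the exchange leaves $J$ unchanged.

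The main subtlety I would watch is the case distinction on $op$: when $op$ is transpose or conjugate transpose, the reversal identity swaps the roles of the two misfit terms, and one must verify that the symmetry of the functional (the $\half$ in front of each of the two terms) absorbs this swap; when $op$ is the identity the two terms map to themselves. In both cases the unordered pair of squared norms is preserved, which is all that is needed. The remaining work is purely the routine bookkeeping of unitary cancellations, which I would carry out but not belabor.
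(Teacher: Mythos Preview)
Your proposal is correct and follows essentially the same approach as the paper's proof: the paper defines $J_R$ and $J_L$ for the two halves of the misfit, shows $J_R(f(X);f(A))$ simplifies to $\half\norm{(\h{X}-\h{A})\h{(\pinv{A})}}^2$ via the same unitary cancellations you describe, then does the same case split on $op$ to see that this equals either $J_R(X;A)$ or $J_L(X;A)$, and concludes that the sum $J_R + J_L$ is preserved. Your write-up is slightly more explicit about the intermediate cancellations, but the argument is the same.
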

\begin{proof}
Define
\begin{eqnarray*}
J_R(X;A) &=& \half \norm{(X - A) \pinv{A}}^2\\
J_L(X;A) &=& \half \norm{\pinv{A} (X - A)}^2
\end{eqnarray*}
so that $J(X; A) = J_R(X;A) + J_L(X;A)$.  The subscripts $L$ and $R$ stand
for left and right, respectively, and refer to the side on which the
pseudoinverse is applied.  Then
\begin{eqnarray*}
J_R(f(X);f(A)) &=& \half \norm{(\alpha P \h{X} Q - \alpha P \h{A} Q) \pinv{(\alpha P \h{A} Q)}}^2\\
               &=& \half \norm{\alpha P (\h{X} - \h{A}) Q \bar{\alpha} Q^* \pinv{(\h{A})} P^*}^2\\
               &=& \half \norm{(\h{X} - \h{A}) \h{(\pinv{A})}}^2
\end{eqnarray*}
If $op$ denotes transpose or conjugate transpose, the equality above shows
that
$$
\half \norm{(\h{X} - \h{A}) \h{(\pinv{A})}}^2 = \half \norm{\pinv{A}(X - A)}^2 = J_L(X;A).
$$
If $op$ denotes no change (that is $\h{Y} \equiv Y$), then
$$
\half \norm{(\h{X} - \h{A}) \h{(\pinv{A})}}^2 = \half \norm{(X - A) \pinv{A}}^2 = J_R(X;A).
$$
A similar result can be proved by expanding $J_L(f(X);f(A))$. For each
substitution for $op$, $J_R(X;A) + J_L(X;A) = J_R(f(X);f(A)) +
J_L(f(X);f(A))$.  Hence proved.
\end{proof}

We now relate the transformation of null-spaces when matrices $X$ and $A$
are transformed by the mapping $f$.
\begin{lemma}
\label{lem:nullsp_invariance}
Let $X \in \Cmn$ such that $X \nullsp{A} = 0$ and $X^* \nullsp{A^*} = 0$ both hold. Then $f(X) \nullsp{f(A)}
= 0$ and $(f(X))^* \nullsp{(f(A))^*} = 0$.
\end{lemma}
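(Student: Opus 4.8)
The plan is to exploit the fact that $f$ is assembled from three operations that each act transparently on null-spaces: multiplication by the nonzero scalar $\alpha$, left/right multiplication by the unitary complex permutation matrices $P$ and $Q$, and the operation $op$. The scalar is harmless since $\abs{\alpha}=1\neq 0$, and because $P,Q$ are unitary one has the elementary identities $\nullsp{PY}=\nullsp{Y}$ and $\nullsp{YQ}=Q^*\nullsp{Y}$ for any size-compatible $Y$. Hence the entire content is in tracking how $op$ converts $\nullsp{f(A)}$ and $\nullsp{(f(A))^*}$ back into $\nullsp{A}$ and $\nullsp{A^*}$, so that the two hypotheses $X\nullsp{A}=0$ and $X^*\nullsp{A^*}=0$ can be applied. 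I would therefore prove the two required identities by a case analysis over $\h{Y}\in\{Y,Y^*,Y^T\}$, in each case first computing $\nullsp{f(A)}$ (respectively $\nullsp{(f(A))^*}$) explicitly and then substituting and using $QQ^*=I$ and $P^*P=I$.

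In the two ``unconjugated'' cases the computation is immediate. For $\h{Y}=Y$ one gets $\nullsp{f(A)}=Q^*\nullsp{A}$, so that $f(X)\nullsp{f(A)}=\alpha PXQQ^*\nullsp{A}=\alpha PX\nullsp{A}=0$ by the first hypothesis; dually $\nullsp{(f(A))^*}=P\nullsp{A^*}$ and $(f(X))^*\nullsp{(f(A))^*}=\bar\alpha Q^*X^*P^*P\nullsp{A^*}=\bar\alpha Q^*X^*\nullsp{A^*}=0$ by the second hypothesis. For $\h{Y}=Y^*$ the two roles swap: using $(A^*)^*=A$ one finds $\nullsp{f(A)}=Q^*\nullsp{A^*}$ and $\nullsp{(f(A))^*}=P\nullsp{A}$, and the same substitutions close the argument, now with the two hypotheses applied in the opposite order. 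Here I would only need the adjoint/transpose rules for products already collected before \Def{def:fmap}.

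The transpose case $\h{Y}=Y^T$ is where I expect the only real friction, because the two hypotheses are stated through $\nullsp{A}$ and $\nullsp{A^*}$ whereas $A^T$ naturally produces the spaces $\nullsp{A^T}$ and $\nullsp{\bar A}$. The key auxiliary facts are the conjugation identities $\nullsp{A^T}=\overline{\nullsp{A^*}}$ and $\nullsp{\bar A}=\overline{\nullsp{A}}$, obtained by conjugating the defining equations and using $A^*=\overline{A^T}$. Combined with $X^T=\overline{X^*}$ and the fact that conjugation distributes over products, these let me upgrade the hypotheses to $X^T\nullsp{A^T}=0$ (by conjugating $X^*\nullsp{A^*}=0$) and $\bar X\,\overline{\nullsp{A}}=0$ (by conjugating $X\nullsp{A}=0$). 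After this translation the transpose case runs exactly as the previous ones: $\nullsp{f(A)}=Q^*\nullsp{A^T}$ gives $f(X)\nullsp{f(A)}=\alpha PX^T\nullsp{A^T}=0$, while $(A^T)^*=\bar A$ yields $(f(A))^*=\bar\alpha Q^*\bar A P^*$, hence $\nullsp{(f(A))^*}=P\,\overline{\nullsp{A}}$ and $(f(X))^*\nullsp{(f(A))^*}=\bar\alpha Q^*\bar X\,\overline{\nullsp{A}}=0$.

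Finally I would remark that the whole argument uses only the unitarity of $P$ and $Q$, the condition $\abs{\alpha}=1$ (indeed $\alpha\neq 0$ suffices), and the commutation of complex conjugation with each admissible $op$; no property of the specific permutation structure is needed at this stage. The main obstacle, such as it is, is purely bookkeeping: getting the conjugates and the order of application of the two hypotheses right in the transpose case.
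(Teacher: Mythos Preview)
Your proposal is correct and follows essentially the same approach as the paper's own proof: a case analysis over the three choices of $op$, in each case computing $\nullsp{f(A)}$ and $\nullsp{(f(A))^*}$ via the unitarity of $P,Q$ and then invoking the appropriate hypothesis, with the transpose case handled through the conjugation identities $\nullsp{A^T}=\overline{\nullsp{A^*}}$ and $\nullsp{\bar A}=\overline{\nullsp{A}}$. The only cosmetic difference is that the paper writes the transpose case directly in terms of $\overline{\nullsp{A^*}}$ and $\overline{\nullsp{A}}$ rather than passing through $\nullsp{A^T}$, but the underlying computation is identical.
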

\begin{proof}
The following implications are easy to see.
\begin{eqnarray*}
v_R \in \nullsp{A}        &\implies& A v_R = 0 \mbox{ and } X v_R = 0\\
v_L \in \nullsp{A^*}      &\implies& A^* v_L = 0 \mbox{ and } X^* v_L = 0\\
w_R \in \nullsp{f(A)}     &\implies& \alpha P \h{A} Q w_R = 0\\
w_L \in \nullsp{(f(A))^*} &\implies& \bar{\alpha} Q^* (\h{A})^* P^* w_L = 0
\end{eqnarray*}
We consider the three $op$ cases separately.  Consider the first case when
$op$ denotes no change (that is $\h{Y} \equiv Y$). Then it is evident that
$\nullsp{f(A)} = Q^* \nullsp{A}$ and $\nullsp{(f(A))^*} = P \nullsp{A^*}$.
Thus,
$$
f(X) \nullsp{f(A)} = \alpha P X Q Q^* \nullsp{A} = \alpha P (X \nullsp{A}) = 0
$$
and
$$
(f(X))^* \nullsp{(f(A))^*} = \bar{\alpha} Q^* X^* P^* P \nullsp{A^*} = \bar{\alpha} Q^* (X^* \nullsp{A^*}) = 0.
$$
This proves the result for the first case.

In the second case, where $op$ denotes conjugate-transpose, a similar
argument shows that $\nullsp{f(A)} = Q^* \nullsp{A^*}$ and
$\nullsp{(f(A))^*} = P \nullsp{A}$.  Thus,
$$
f(X) \nullsp{f(A)} = \alpha P X^* Q Q^* \nullsp{A^*} = \alpha P (X^* \nullsp{A^*}) = 0
$$
and
$$
(f(X))^* \nullsp{(f(A))^*} = \bar{\alpha} Q^* X P^* P \nullsp{A} = \bar{\alpha} Q^* (X \nullsp{A}) = 0.
$$
This proves the result for the second case.

In the third case, where $op$ denotes transpose, we can follow similar
steps asn show that $\nullsp{f(A)} = Q^* \overline{\nullsp{A^*}}$ and
$\nullsp{(f(A))^*} = P \overline{\nullsp{A}}$.  Thus,
$$
f(X) \nullsp{f(A)} = \alpha P X^T Q Q^* \overline{\nullsp{A^*}} = \alpha P \overline{X^* \nullsp{A^*}} = 0
$$
and
$$
(f(X))^* \nullsp{(f(A))^*} = \bar{\alpha} Q^* \overline{X} P^* P \overline{\nullsp{A}} = \bar{\alpha} Q^* \overline{X \nullsp{A}} = 0.
$$
This proves the result for the third case and we are done.
\end{proof}

We now relate how the mapping $f$ interacts with the operation $\patsym$
that computes the sparsity pattern (\Sec{sec:Zmat}).
\begin{lemma}
\label{lem:permute_invariance}
Let $A \in \Cmn$ and $f(A) = \alpha P A^{op} Q$ as used earlier, then
$\pat{f(A)} = \abs{P} \pat{A} \abs{Q}$.  Here \abs{\cdot}denotes entry-wise
modulus.
\end{lemma}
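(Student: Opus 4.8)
The plan is to obtain $\pat{f(A)}$ from $\pat{A}$ by peeling off, in turn, the scalar $\alpha$, the two complex permutation matrices, and finally the operation $op$, invoking at each stage one of the elementary pattern properties P-1 through P-7 already recorded in~\Sec{sec:lp_pat_properties}. Since $\abs{\alpha} = 1$ in particular forces $\alpha \neq 0$, the scale-invariance property P-2 (\Prop{prop:Zmat_homogeneous}) applies to the single matrix argument $P\,\h{A}\,Q$ and gives at once
$$
\pat{f(A)} = \pat{\alpha P \h{A} Q} = \pat{P \h{A} Q}.
$$
This is legitimate because $\h{A}$ is an honest matrix of the appropriate size whatever $op$ denotes, so $P\,\h{A}\,Q$ is a well-formed product.

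Next I would strip off the permutations. Property P-6, which asserts $\pat{P B Q} = \abs{P}\,\pat{B}\,\abs{Q}$ for any matrix $B$ and any size-compatible complex permutation matrices $P,Q$, applies with $B = \h{A}$ and yields
$$
\pat{P \h{A} Q} = \abs{P}\,\pat{\h{A}}\,\abs{Q}.
$$
It then remains only to re-express $\pat{\h{A}}$ through $\pat{A}$, and this is precisely what properties P-3 and P-4 supply: when $op$ is the identity, $\pat{\h{A}} = \pat{A}$; when $op$ is transpose, $\pat{A^T} = (\pat{A})^T$ by P-3; and when $op$ is conjugate transpose, $\pat{A^*} = (\pat{A})^T$ by P-4. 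Substituting the relevant one of these into the previous display closes the argument in each of the three cases, with no computation left to do.

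The only point I expect to require genuine care is the dimensional bookkeeping in the two transpose cases. There $\h{A} \in \matspace{\complex}{n}{m}$, so one must take $P \in \Cnn$ and $Q \in \Cmm$, and the factor sandwiched between $\abs{P}$ and $\abs{Q}$ is the transposed pattern $\pat{\h A} = (\pat{A})^T \in \matspace{\reals}{n}{m}$ rather than $\pat{A}$ itself; the stated identity should be read with this $\pat{\h A}$ understood in place of $\pat{A}$, which is exactly what makes the sizes of $\abs{P}\,\pat{\h A}\,\abs{Q}$ and $\pat{f(A)}$ agree. Once that convention is fixed there is no hidden analytic difficulty: the lemma is simply the composition of P-2, P-6, and P-3/P-4, and it holds precisely because the row-by-row and column-by-column construction of the pattern in~\Sec{sec:Zmat} was designed so that these elementary equivariance properties are available.
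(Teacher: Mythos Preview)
Your proof is correct and follows exactly the route the paper intends: the paper's own proof is a single sentence declaring the result ``immediate by using the various properties of $\patsym$ enumerated in~\Sec{sec:lp_pat_properties}'', and you have simply made that invocation explicit by naming P-2, P-6, and P-3/P-4 in the right order. Your observation about the dimensional bookkeeping in the transpose cases is also well taken; the paper only ever applies the lemma to square matrices with $A = f(A)$, so the issue never surfaces downstream, but as written the identity indeed should be read with $(\pat{A})^{op}$ in place of $\pat{A}$ when $op$ is a transpose.
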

\begin{proof}
The proof is immediate by using the various properties of $\patsym$
enumerated in~\Sec{sec:lp_pat_properties}.
\end{proof}

We show how the solution of the full sparsification problem changes when
the input matrix changes.
\begin{theorem}
\label{thm:master}
If $X$ solves~\Eq{eq:min_J_2} for a given input $A$, then $f(X)$ solves it
for the input $f(A)$, where $f$ is given in~\Def{def:fmap}.
\end{theorem}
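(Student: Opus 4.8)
The plan is to establish two facts: that $f(X)$ is feasible for the minimization problem \Eq{eq:min_J_2} with input $f(A)$, and that among all such feasible matrices $f(X)$ attains the minimum of $J(\,\cdot\,;f(A))$. The three preceding lemmas are engineered to supply exactly these pieces---one per constraint type, plus one for the objective---so the proof should be an assembly rather than a fresh computation. The first thing I would record is that each of Lemmas~\ref{lem:J_invariance}, \ref{lem:nullsp_invariance}, and~\ref{lem:permute_invariance} is stated for an arbitrary map of the form in \Def{def:fmap}, and that $f$ is a bijection whose inverse is again of that form. Writing $f(Y)=\alpha P \h{Y} Q$, the identity case has $f^{-1}(Z)=\bar{\alpha}\,P^{*} Z Q^{*}$, and the transpose and conjugate-transpose cases are analogous, each built from a modulus-one scalar and complex permutation matrices. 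Consequently the three lemmas apply to $f$ and to $f^{-1}$ interchangeably, which is what lets me move feasible points in both directions.

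Next I would verify feasibility of $f(X)$. The two null-space constraints $f(X)\nullsp{f(A)}=0$ and $(f(X))^{*}\nullsp{(f(A))^{*}}=0$ are precisely the conclusion of Lemma~\ref{lem:nullsp_invariance}, applied to the hypothesis that $X\nullsp{A}=0$ and $X^{*}\nullsp{A^{*}}=0$. For the sparsity constraint I would note that the composite operation $\alpha P(\,\cdot\,)^{op}Q$ depends only on the support of its argument, not on the entry values, since $\alpha$ has modulus one and $P,Q$ are complex permutations; concretely $\mathrm{supp}(f(M)) = \abs{P}\,\mathrm{supp}(\h{M})\,\abs{Q}$ for any $M$. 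Applying this with $M=X$ gives $\mathrm{supp}(f(X)) = \abs{P}\,\mathrm{supp}(\h{X})\,\abs{Q}$, while Lemma~\ref{lem:permute_invariance}, read through properties P-3, P-4, and P-6, gives $\pat{f(A)} = \abs{P}\,\pat{\h{A}}\,\abs{Q}$. Feasibility of $X$ means $\mathrm{supp}(X)\le\pat{A}$ entrywise; transposition (when $op$ calls for it) preserves this, as does left multiplication by $\abs{P}$ and right multiplication by $\abs{Q}$, both nonnegative permutation matrices. Hence $\mathrm{supp}(f(X))\le\pat{f(A)}$, which is exactly the statement that $f(X)_{ij}=0$ whenever $\pat{f(A)}_{ij}=0$.

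For optimality I would take an arbitrary feasible $Z$ for the $f(A)$ problem and pull it back. Applying the feasibility argument of the previous paragraph to the map $f^{-1}$ shows $f^{-1}(Z)$ is feasible for the original problem with input $A$, so optimality of $X$ yields $J(X;A)\le J(f^{-1}(Z);A)$. Invoking Lemma~\ref{lem:J_invariance} twice---once as $J(X;A)=J(f(X);f(A))$ and once as $J(f^{-1}(Z);A)=J(Z;f(A))$---turns this into $J(f(X);f(A))\le J(Z;f(A))$. Since $Z$ was arbitrary, $f(X)$ minimizes $J(\,\cdot\,;f(A))$ over the feasible set, which is the claim; uniqueness from \Thm{thm:unique} even pins down $f(X)$ as the unique solution.

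The step I expect to be the genuine obstacle is the sparsity bookkeeping in the second paragraph. The null-space and objective invariances are handed to me verbatim by Lemmas~\ref{lem:nullsp_invariance} and~\ref{lem:J_invariance}, but the sparsity constraint forces me to track how $op$ and the permutations act on support patterns and, crucially, to check that the entrywise containment $\mathrm{supp}(X)\le\pat{A}$ survives transposition and multiplication by $\abs{P}$ and $\abs{Q}$. The monotonicity of boolean matrix multiplication by nonnegative permutation matrices is what makes this go through, and it is the one place where I must lean directly on the detailed pattern properties P-3, P-4, and P-6 rather than on a single ready-made lemma.
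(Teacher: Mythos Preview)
Your proof is correct and follows the same strategy as the paper, which simply invokes Lemmas~\ref{lem:J_invariance}, \ref{lem:nullsp_invariance}, and~\ref{lem:permute_invariance} to assert that the three components of the minimization problem are invariant under $f$ and declares the result proved. Your version is considerably more explicit---in particular the observation that $f^{-1}$ is again of the form in \Def{def:fmap}, the pull-back argument for optimality, and the support bookkeeping for the sparsity constraint---but these are natural elaborations of the paper's one-line sketch rather than a genuinely different route.
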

\begin{proof}
The lemmas \ref{lem:J_invariance}, \ref{lem:nullsp_invariance}, and \ref{lem:permute_invariance}
show that each of the three components of the
minimization problem -- the sparsity constraints, the null-space related
constraints, the form of the misfit functional -- are invariant under
transformation by $f$.  This proves the statement.
\end{proof}

\subsection{Invariance for specific matrix subspaces}
\label{sec:invariance_special}

We now define a few specific complex permutation matrices
(see \Def{def:complex_perm}).  This allows us to succinctly characterize
certain special matrix subspaces.
\begin{definition}
\label{def:matJ}
For each $m > 0$, $J_m$ is a matrix in $\Rmm$ with ones on the anti-diagonal
and zeros elsewhere.
\end{definition}
For example,
$$
J_3 =
\begin{bmatrix}
0 & 0 & 1 \\
0 & 1 & 0 \\
1 & 0 & 0
\end{bmatrix}.
$$
\begin{definition}
\label{def:matK}
For each even $m > 0$, $K_m$ is the following skew-symmetric matrix in $\Rmm$.
$$
K_m :=
\begin{bmatrix}
0 & I_{\frac{m}{2}} \\
-I_{\frac{m}{2}} & 0
\end{bmatrix}
$$
\end{definition}
\begin{definition}
\label{def:matC}
For each $m > 0$,
$$
C^+_m :=
\begin{bmatrix}
0_{m-1} & I_{m-1} \\
1 & 0_{m-1}^T
\end{bmatrix}
\mbox{ and }
C^-_m :=
\begin{bmatrix}
0_{m-1} & I_{m-1} \\
-1 & 0_{m-1}^T
\end{bmatrix}.
$$
where $0_{m-1}$ is the zero vector in ${\reals}^{(m-1) \times 1}$.
\end{definition}
The matrices $C^+_m$ and $C^-_m$ are used for cyclic permutations.
\Tab{tab:mat_spaces} shows how certain matrix subspaces can be
characterized using these permutation
matrices.  The conditions satisfied by the sparsity pattern computed by
\Alg{alg:Zvec} can be easily proved using the properties mentioned in
\Sec{sec:lp_pat_properties}.

\begin{table}
\label{tab:mat_spaces}
\begin{center}
\renewcommand{\arraystretch}{1.1}
\begin{tabular}{ | l | l | l | }
\hline
Matrix subspace        & Condition on $A$      & $Z = \pat{A}$ satisfies      \\ \hline
\hline
centrosymmetric        & $ A J - J A     = 0 $ & $ Z J - J Z               = 0 $ \\ \hline
skew-centrosymmetric   & $ A J + J A     = 0 $ & $ Z J - J Z               = 0 $ \\ \hline
circulant              & $ A C^+ - C^+ A = 0 $ & $ Z C^+ - C^+ Z           = 0 $ \\ \hline
skew-circulant         & $ A C^- - C^- A = 0 $ & $ Z C^+ - C^+ Z           = 0 $ \\ \hline
complex-symmetric      & $ A - A^T       = 0 $ & $ Z - Z^T                 = 0 $ \\ \hline
skew-complex-symmetric & $ A + A^T       = 0 $ & $ Z - Z^T                 = 0 $ \\ \hline
Hamiltonian            & $ K A + A^* K   = 0 $ & $ \abs{K} Z - Z^T \abs{K} = 0 $ \\ \hline
skew-Hamiltonian       & $ K A - A^* K   = 0 $ & $ \abs{K} Z - Z^T \abs{K} = 0 $ \\ \hline
Hermitian              & $ A - A^*       = 0 $ & $ Z - Z^T                 = 0 $ \\ \hline
skew-Hermitian         & $ A + A^*       = 0 $ & $ Z - Z^T                 = 0 $ \\ \hline
persymmetric           & $ A J - J A^*   = 0 $ & $ Z J - J Z^T             = 0 $ \\ \hline
skew-persymmetric      & $ A J + J A^*   = 0 $ & $ Z J - J Z^T             = 0 $ \\ \hline
symmetric (real)       & $ A - A^T       = 0 $ & $ Z - Z^T                 = 0 $ \\ \hline
skew-symmetric (real)  & $ A + A^T       = 0 $ & $ Z - Z^T                 = 0 $ \\ \hline
\end{tabular}
\end{center}
\caption{A summary of conditions on a matrix $A$ so that it belongs to a
particular subspace of $\Cmm$.  Also shown are the conditions satisfied by
the sparsity pattern computed by \Alg{alg:Zvec}.
Definitions \ref{def:matJ}, \ref{def:matK}, and \ref{def:matC}
give the expressions for the matrices $J, K, C^+,$ and $C^-$.}
\end{table}

\begin{theorem}
If the input matrix $A$ for the problem in~\Eq{eq:min_J_2} belongs to one of the
following matrix subspaces -- Hermitian, complex-symmetric, Hamiltonian, circulant,
centrosymmetric, persymmetric, or one of the skew counterparts -- then the
output matrix $X$ belongs to the same subspace.
\end{theorem}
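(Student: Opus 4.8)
The plan is to recognize each of the listed matrix subspaces as the fixed-point set of a single map $f$ of the form in \Def{def:fmap}, and then invoke the invariance result \Thm{thm:master} together with the global uniqueness of the minimizer (\Thm{thm:unique}). Concretely, for each subspace I would exhibit a choice of $\alpha$ (with $\abs{\alpha}=1$), complex permutation matrices $P,Q$, and an operation $op \in \{\text{id}, *, T\}$ such that membership of $A$ in that subspace is \emph{exactly} the condition $f(A) = A$. Since all the subspaces in question live in $\Cmm$, the map $f$ sends $\Cmm$ to itself even when $op$ is a transpose or conjugate transpose, so no dimension mismatch arises.

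The first step is to rewrite every relevant row of \Tab{tab:mat_spaces} as such a fixed-point equation. For the self-adjoint-type conditions this is immediate: $A = A^*$ (Hermitian) corresponds to $\alpha = 1$, $P = Q = I$, $op = {*}$; $A = A^T$ (complex-symmetric) uses $op = T$; and the two skew counterparts use $\alpha = -1$. The commutator-type conditions are rewritten by conjugation, using that each of $J, K, C^+, C^-$ is a complex permutation matrix (\Def{def:complex_perm}) whose inverse is again one: $J^{-1} = J$ (\Def{def:matJ}), $K^{-1} = -K$ (\Def{def:matK}), and $(C^\pm)^{-1} = (C^\pm)^*$ since $C^\pm$ is unitary (\Def{def:matC}). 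For instance, $AJ = JA$ (centrosymmetric) becomes $A = J A J$; $AJ = JA^*$ (persymmetric) becomes $A = J A^* J$; $A C^\pm = C^\pm A$ (circulant and skew-circulant) becomes $A = C^\pm A (C^\pm)^*$; and $KA = -A^* K$ (Hamiltonian) becomes $A = K A^* K$ after substituting $K^{-1} = -K$. In every such case $P$ and $Q$ are built from $I, J, K, C^\pm$ and their adjoints, each an admissible complex permutation matrix, and the remaining skew variants differ only by an overall sign that is absorbed into $\alpha \in \{\pm 1\}$ (the skew-circulant case being the one where the sign is already carried by $C^-$ itself).

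With the reformulation in hand, the main argument is short. Let $A$ lie in one of the subspaces and let $f$ be the associated map, so that $f(A) = A$. Let $X$ be the minimizer of \Eq{eq:min_J_2} for input $A$, which exists and is unique by \Thm{thm:unique}. By \Thm{thm:master}, $f(X)$ solves \Eq{eq:min_J_2} for input $f(A)$; but $f(A) = A$, so $f(X)$ is also a minimizer for input $A$. Note in particular that the imposed sparsity pattern is unchanged, since $\pat{f(A)} = \pat{A}$ whenever $f(A) = A$, so the two problems are literally identical. Uniqueness then forces $f(X) = X$, which is precisely the statement that $X$ belongs to the same subspace. This disposes of all the listed cases at once.

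The heavy lifting has already been carried out in \Thm{thm:master} (built on the invariance lemmas for the misfit, the null-space constraints, and the sparsity pattern), so I do not anticipate a genuine obstacle here. The only point demanding care is the verification, row by row, that each subspace condition is truly equivalent to a fixed-point equation $f(A)=A$ for an admissible $f$ --- in particular that the commutator-type conditions can be conjugated into this form using $J^{-1}=J$, $K^{-1}=-K$, and the unitarity of $C^\pm$, and that the resulting $P,Q$ are indeed complex permutation matrices. Once that bookkeeping is confirmed, the appeal to invariance plus uniqueness is automatic.
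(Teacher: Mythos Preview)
Your proposal is correct and follows essentially the same approach as the paper: express each subspace condition as a fixed-point equation $f(A)=A$ for a suitable $f$ from \Def{def:fmap}, then combine \Thm{thm:master} with the uniqueness in \Thm{thm:unique} to conclude $f(X)=X$. In fact you give more detail than the paper does, explicitly working out the conjugation that rewrites the commutator-type conditions (using $J^{-1}=J$, $K^{-1}=-K$, and the unitarity of $C^\pm$) into fixed-point form.
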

\begin{proof}
If the input $A$ satisfies any of the stated properties, then $A = f(A)$
for a specific choice of $f$ (see~\Def{def:fmap}) for one of the categories
shown in \Tab{tab:mat_spaces}.  If $X$ is the solution of
\Eq{eq:min_J_2} corresponding to $A$, then using \Thm{thm:master}, $f(X)$
is the solution corresponding to $f(A)$.  Since $A = f(A)$, and as proved
in \Thm{thm:unique}, \Eq{eq:min_J_2} has a unique solution, it implies $X =
f(X)$.  Thus $X$ preserves the relevant property satisfied by $A$.
\end{proof}

\section{Numerical results}
\label{sec:num_res}

We now present some sparsification results for a fixed real asymmetric matrix
$A \in \reals^{40\times 40},$ where
\begin{equation}
\label{eq:A_sample}
A_{ij} = \cos(3^{\frac{1}{4}} i^{\frac{1}{2}} j)^5,
\end{equation}
and the indices start at 1.
This is a good candidate matrix since its condition number is approximately
621, which is similar to numbers found in spectral finite element matrices, we don't
have to scale its rows and columns, it is deterministic,
and it has some values that are near zero in magnitude.  See~\Fig{fig:A_sample_and_X}(a)
for a 2-D plot of the matrix generated using the {\tt cspy} program~\cite{Suitesparse}.

We first sparsify it using the following parameters: $p = 1$ and $q = 0.8$.
\Fig{fig:A_sample_and_X}(b) shows the sparsity pattern and values of the output matrix $X$.
We get the following
data for this set of parameters -- $\text{cond}(X) = 552$,
$\text{cond}(\pinv{A} X) = 4.73$, $\text{cond}(X \pinv{A}) = 5.37$, and
$X$ contains 597 non-zeros, which is approximately 37\% density.  Thus, nearly
two-thirds of the entries are removed due to sparsification and the condition
number is not far from 1, the minimal value.

\begin{figure}
\begin{center}
	\subfigure[Input dense $A$]
	{
	\includegraphics[scale=0.5]{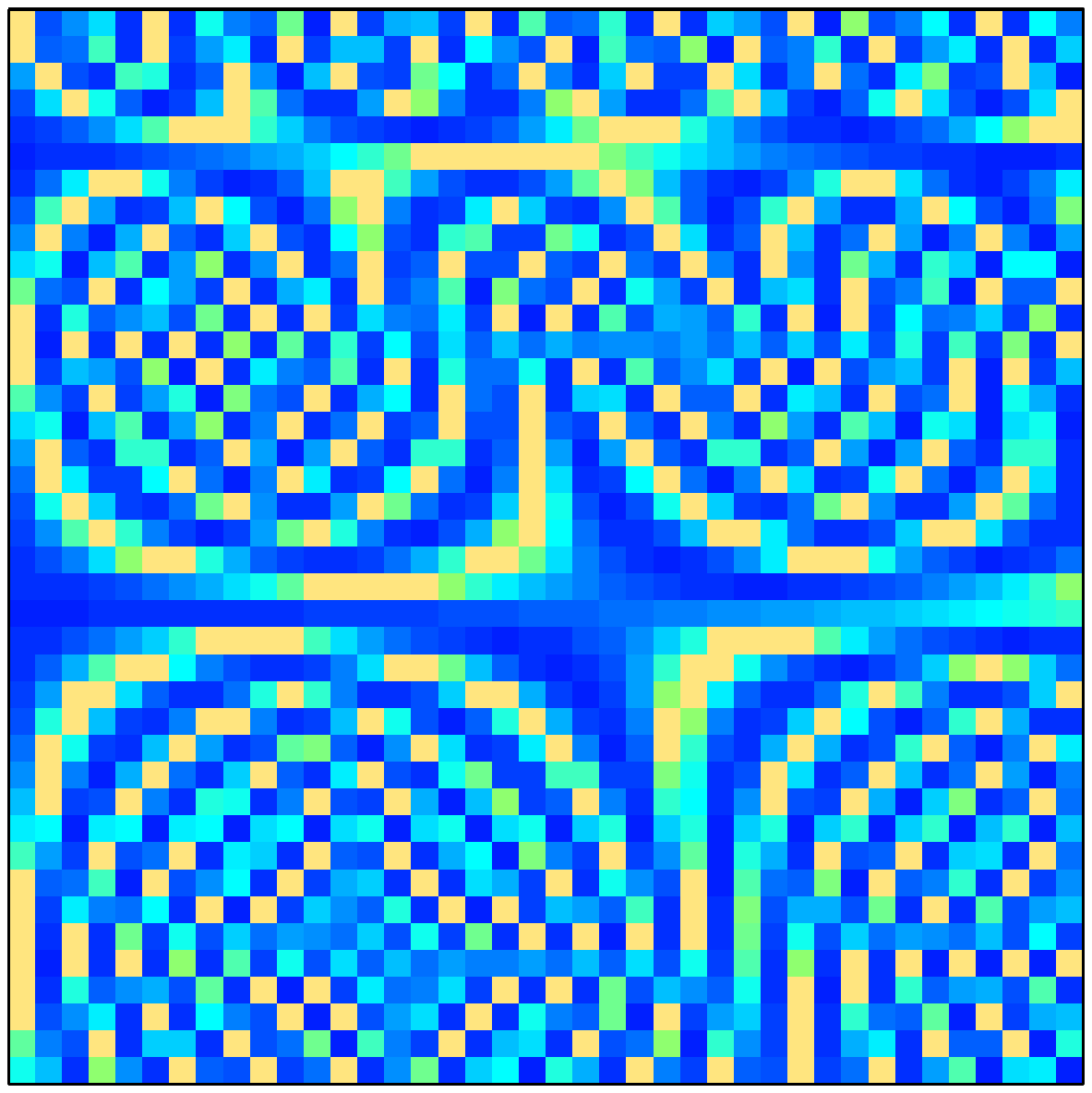}
	}
	\subfigure[Output sparse $X$]
	{
  \includegraphics[scale=0.5]{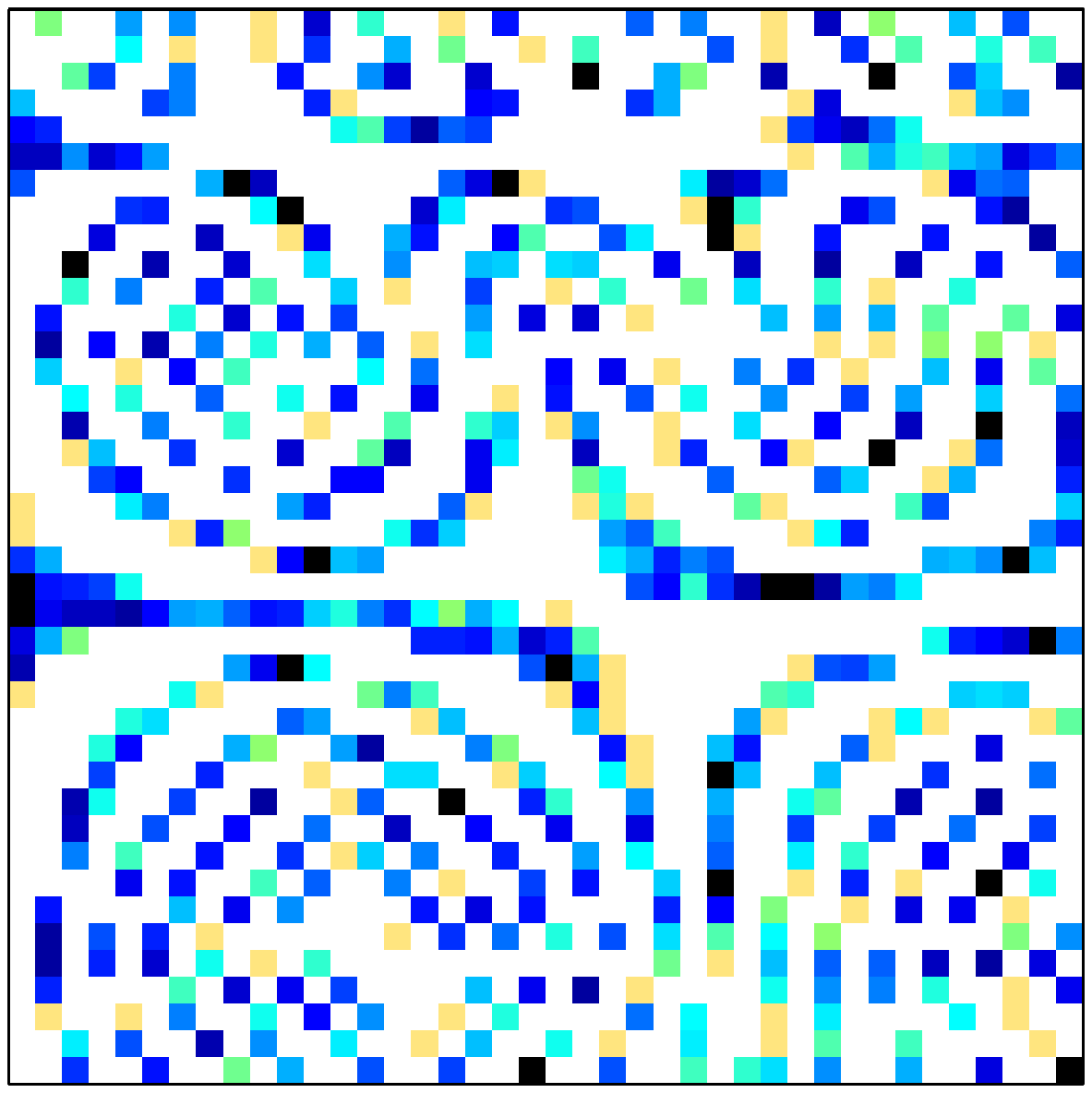}
	}
\caption{Input dense $A \in \reals^{40\times 40}$, specified in~\Eq{eq:A_sample}, and output sparse $X$ for $p = 1$ and $q = 0.8$.  Plots
generated using the {\tt cspy} program~\cite{Suitesparse} where darker pixel values correspond to larger magnitude.}
\label{fig:A_sample_and_X}
\end{center}
\end{figure}

\Fig{fig:svd} shows the singular values of $A$ and $X$.  Clearly,
the lower end of the spectrum is perturbed less than the higher end.
This shows that the algorithm is working as intended.
\begin{figure}
\centering
\includegraphics[scale=0.4]{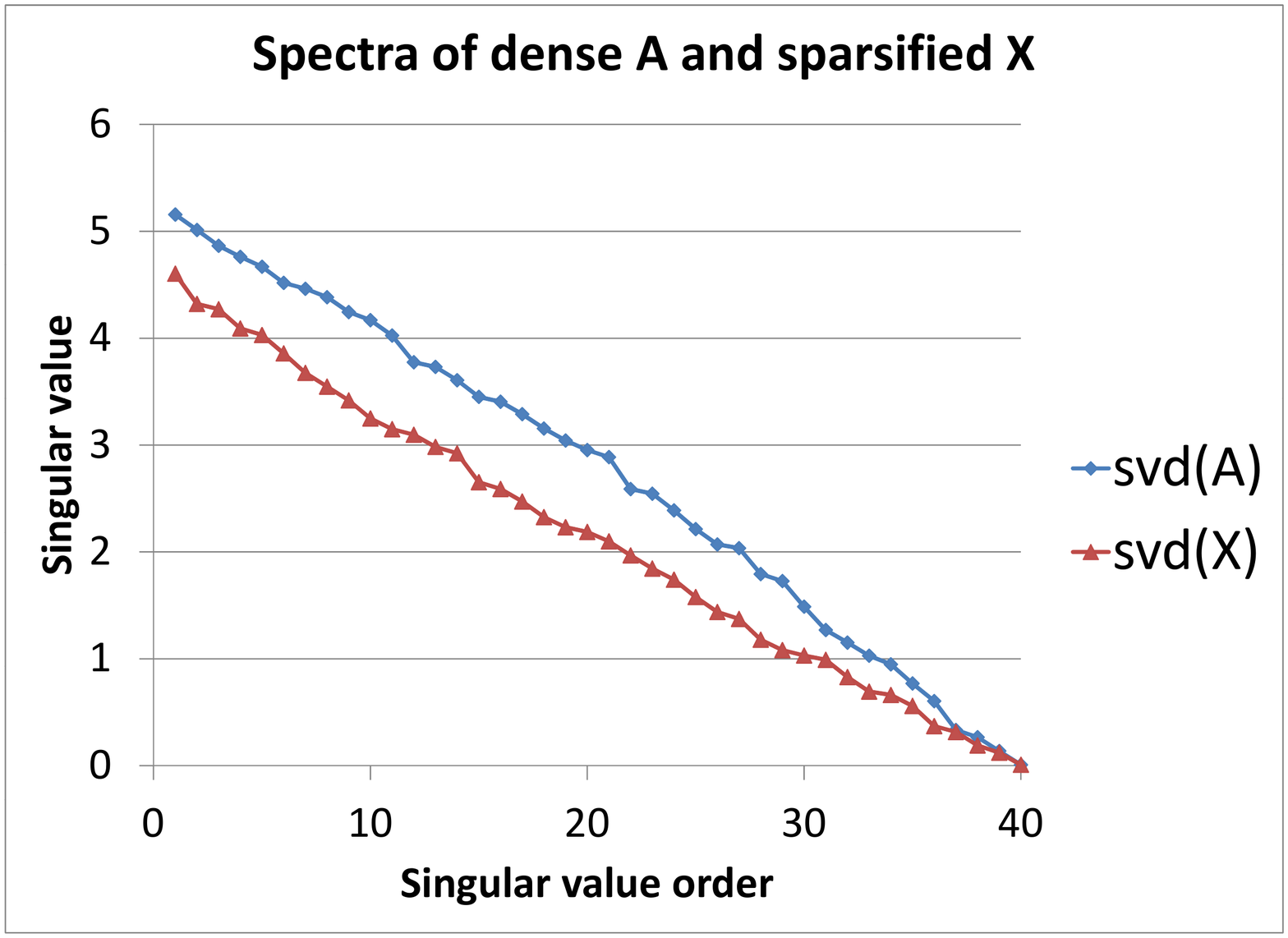}
\caption{Singular values of $A$, specified in~\Eq{eq:A_sample}, and $X$ for $p = 1$ and $q = 0.8$.}
\label{fig:svd}
\end{figure} 

It is desirable to compute three quantities for
measuring sparsification performance.  First is
the condition number of $\pinv{A} X$ or $X \pinv{A}$,
second is the Frobenius norm difference of inverses,
$\norm{\pinv{X} - \pinv{A}}_F / \norm{\pinv{A}}_F$,
and third is the number of non-zeros in $X$.
The exact multiplication order in $\pinv{A} X$ or $X \pinv{A}$
is not particularly important.
This is because almost always the two quantities are close to
each other, as we have observed. \Fig{fig:cond_nnz}(a) shows how the
condition number varies when we vary $q$ and $p$.
Similarly, \Fig{fig:cond_nnz}(b) shows how the
number of non-zeros change on varying the parameters.
Note that for a fixed $q$, increasing $p$ leads
to an increase in the number of non-zeros.
The variation in condition number is reasonably
smooth as long as not too many entries are discarded.
An important measure of how much $X$ deviates from
$A$ is the relative
Frobenius norm difference of inverses.
This is shown in~\Fig{fig:frob_diff}.  The difference
of inverses is important because our goal is to approximate
the action of the inverse and not the operator itself.
\begin{figure}
\begin{center}
	\subfigure[Condition number of $\pinv{A} X$]
	{
  \includegraphics[scale=0.4]{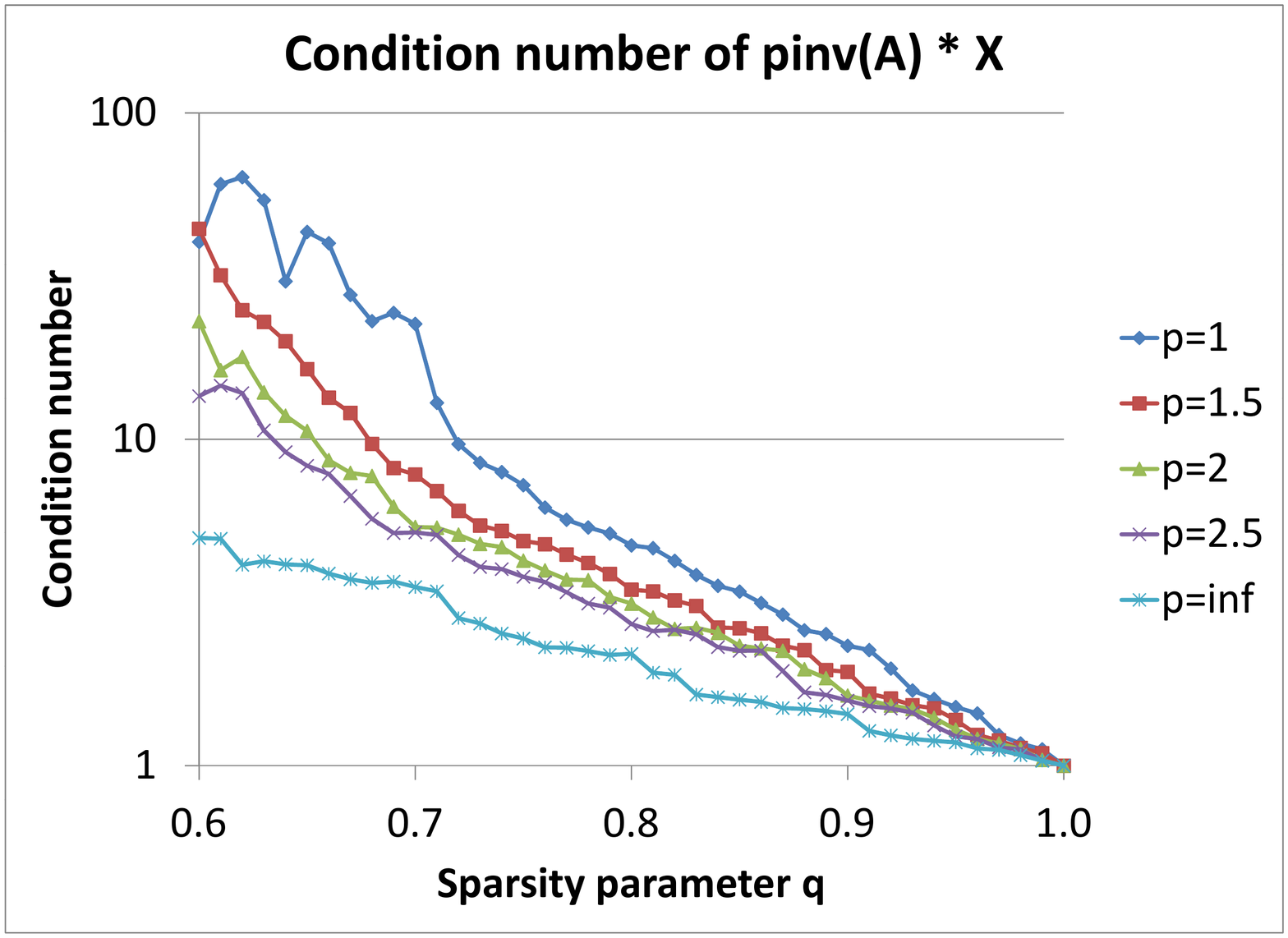}
	}\\[0.5cm]
	\subfigure[Number of non-zeros in $X$]
	{
	\includegraphics[scale=0.4]{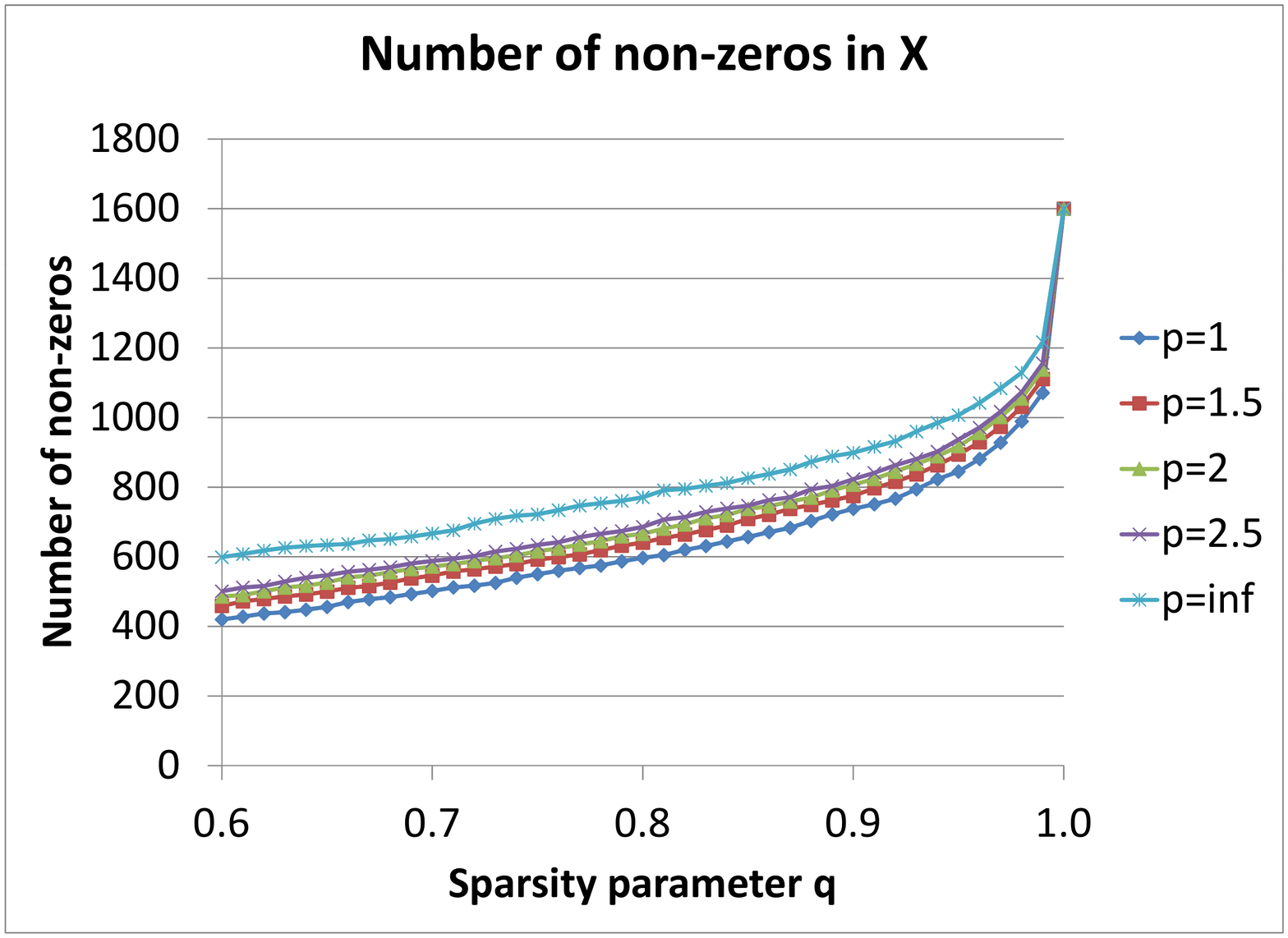}
	}
\caption{Effect of varying $p$ and $q$ on conditioning and sparsity.  The input $A$ is specified in~\Eq{eq:A_sample}.}
\label{fig:cond_nnz}
\end{center}
\end{figure}

\begin{figure}
\centering
\includegraphics[scale=0.4]{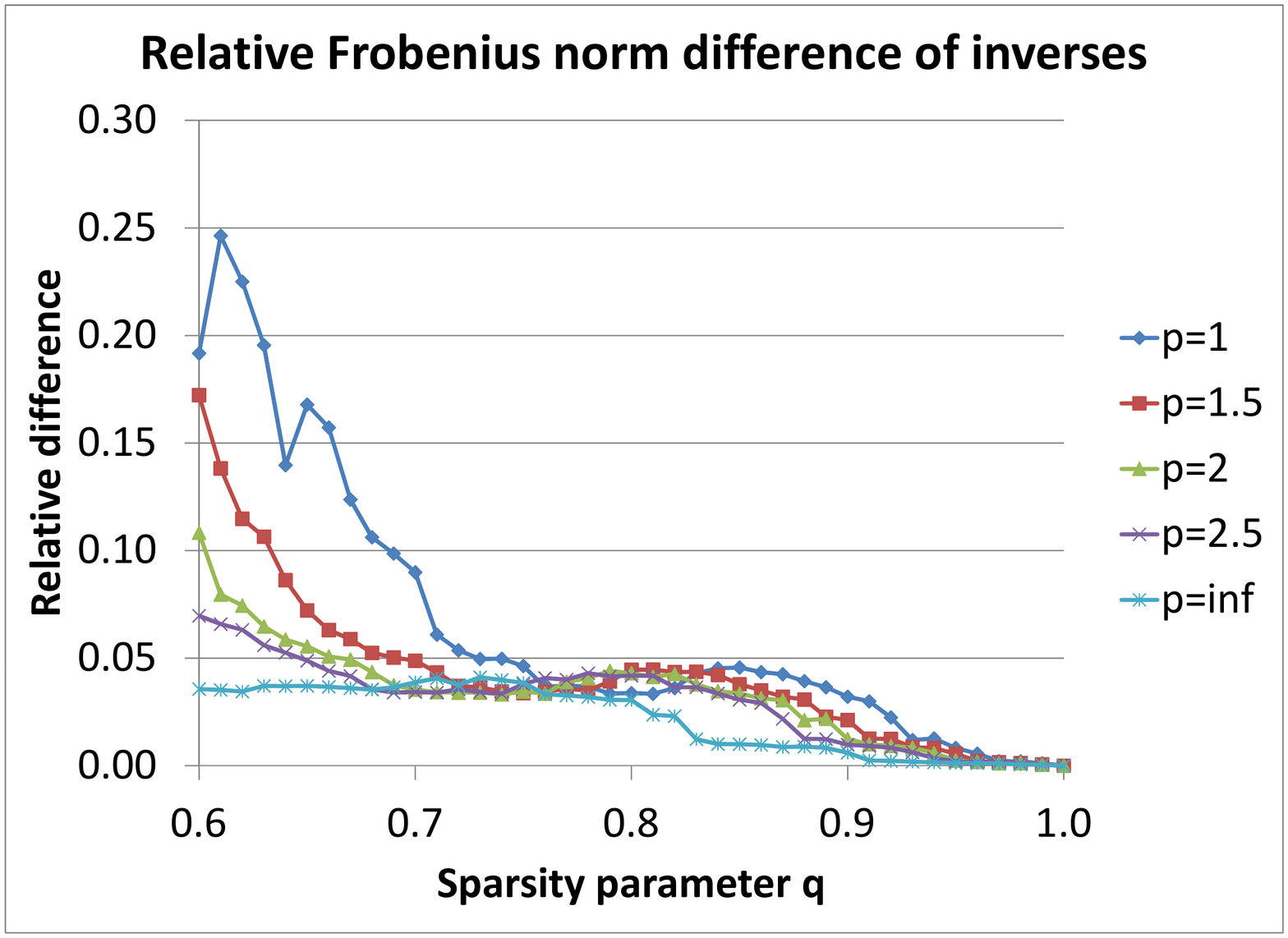}
\caption{Effect of varying $p$ and $q$ on the relative Frobenius norm difference of inverses
$\left(\norm{\pinv{X} - \pinv{A}}_F / \norm{\pinv{A}}_F \right)$.}
\label{fig:frob_diff}
\end{figure} 

Based on the discussion above, it is natural to question
which $p$ to choose.  We argue that the precise value of
$p$ is not important as long as one can change $q$
to achieve a given number of non-zeros.  An evidence is shown
in~\Fig{fig:psensitive} where we plot the data for
various $p$ and $q$ values together.  It shows that the conditioning
and relative difference are highly correlated with the number of non-zeros rather than
the exact $p$ and $q$ values.  The ``curves'' for five
$p$ values lie almost on top of each other when $q$ is varied
in $[0.6,1]$.
\begin{figure}
\begin{center}
	\subfigure[$\text{cond}(\pinv{A} X)$]
	{
     \includegraphics[scale=0.4]{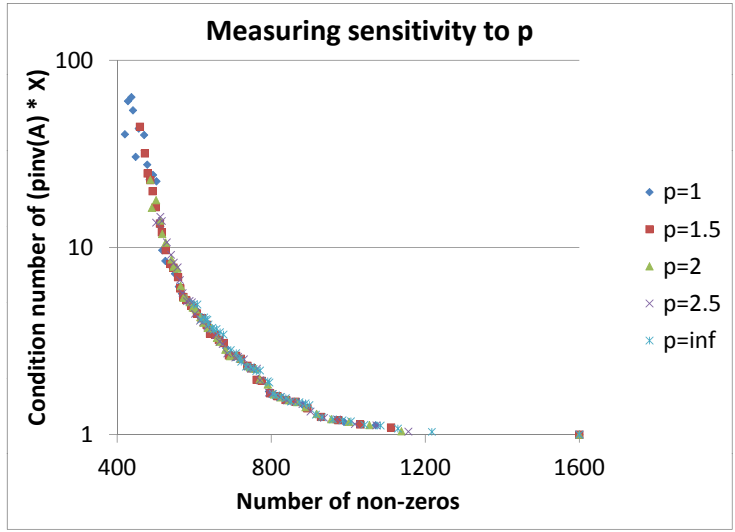}
	}\\[0.5cm]
	\subfigure[$\norm{\pinv{X} - \pinv{A}}_F / \norm{\pinv{A}}_F $]
	{
	\includegraphics[scale=0.4]{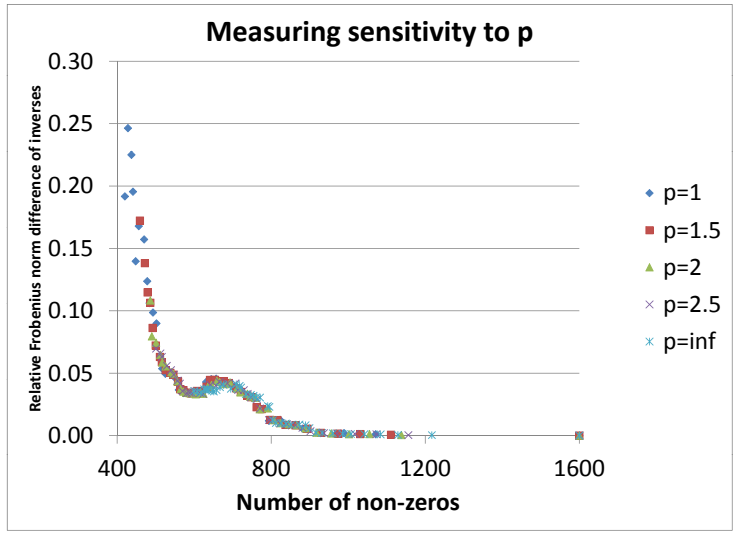}
	}
\caption{The figures show that the choice of $p$ is not too important
if $q$ can be varied to achieve a specific amount of sparsity.  Conditioning
and relative difference of inverses is highly correlated with sparsity and not
with parameters $p$ and $q$ individually.}
\label{fig:psensitive}
\end{center}
\end{figure} 

We show that the computed $X$ is such that the eigenvalues
of $\pinv{A} X$, which are same as the eigenvalues of
$X\pinv{A}$, are clustered around a value near 1 on the
real axis.  See~\Fig{fig:eig}(a) and (b) for eigenvalues of
$A$, $X$, and $\pinv{A} X$.

\begin{figure}
\begin{center}
	\subfigure[Eigenvalues of $A$ and $X$]
	{
  \includegraphics[scale=0.4]{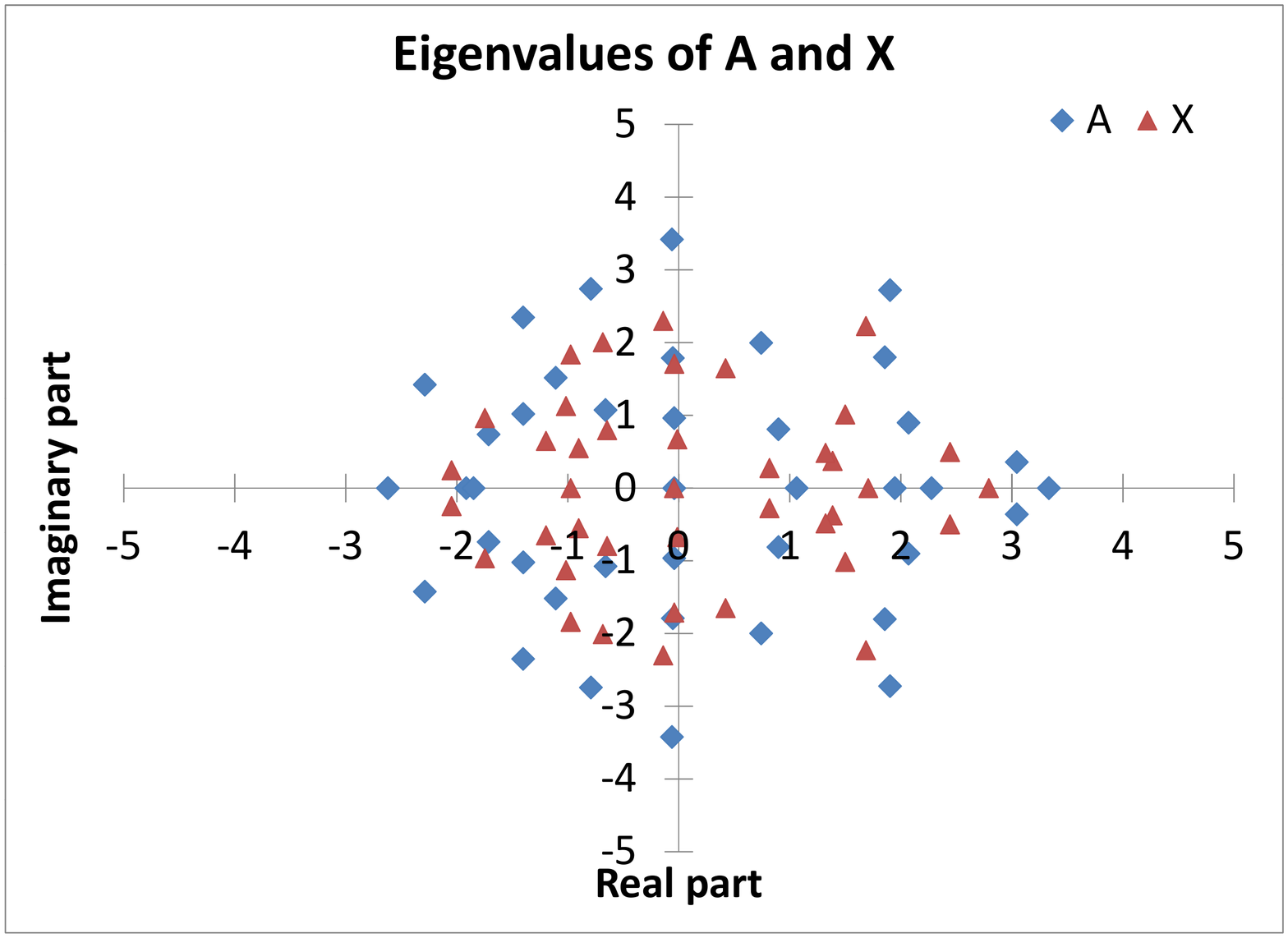}
	}\\[1cm]
	\subfigure[Eigenvalues of $\pinv{A} X$]
	{
	\includegraphics[scale=0.4]{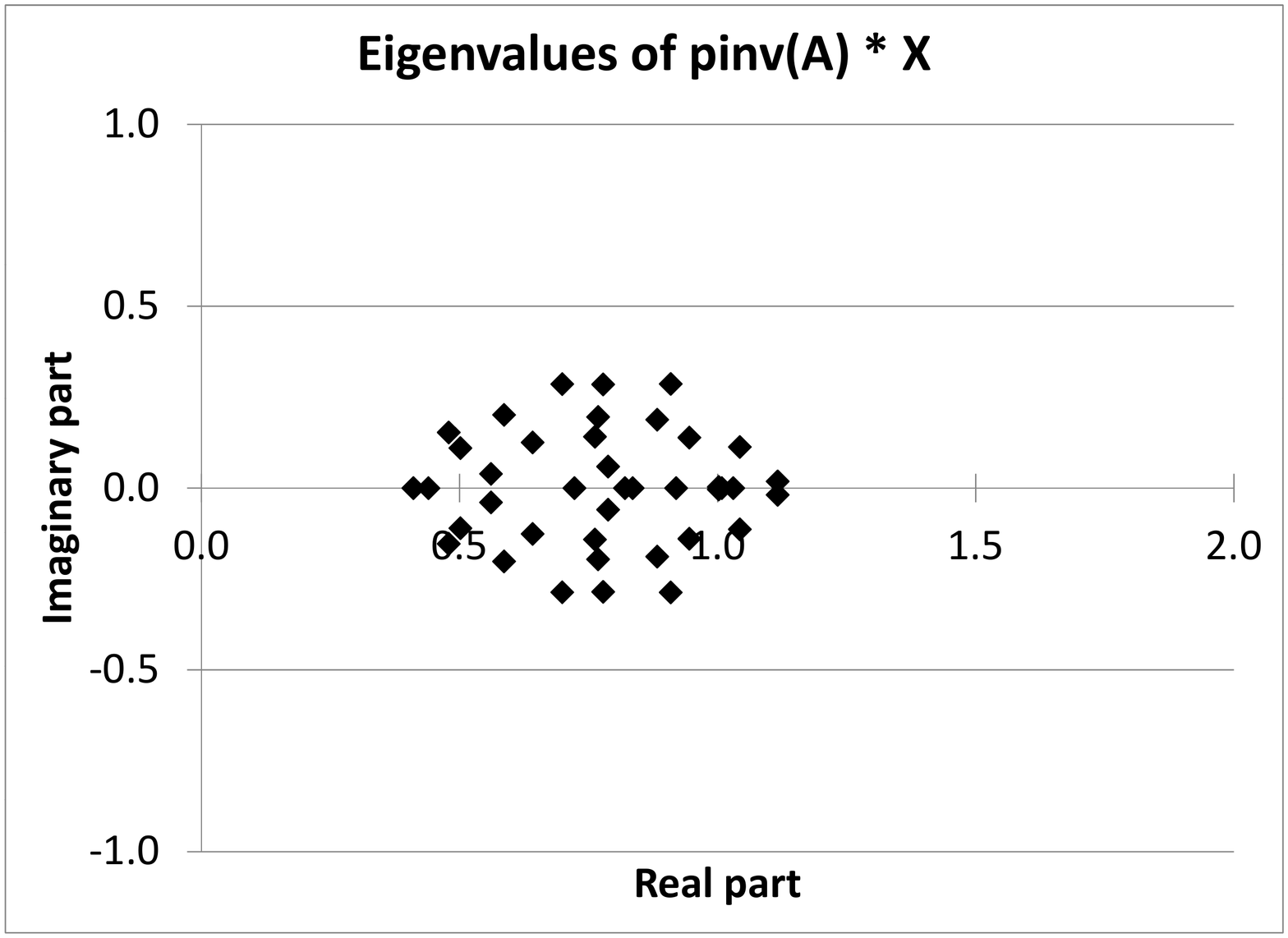}
	}
\caption{Eigenvalues before and after sparsification. The clustered values in the second figure
show that $X$ would perform well for preconditioning $A$ specified in~\Eq{eq:A_sample}.}
\label{fig:eig}
\end{center}
\end{figure}

The last observation we have, which will be useful in
the next paper, is that the non-zero entries of $X$
are highly correlated with the entries of $A$ at the
preserved locations.  \Fig{fig:correlate} shows
the sorted entries of vectorized $A$ and the corresponding
entries in $X$ for $p = 1$ and $q = 0.9$.
\begin{figure}
\centering
\includegraphics[scale=0.4]{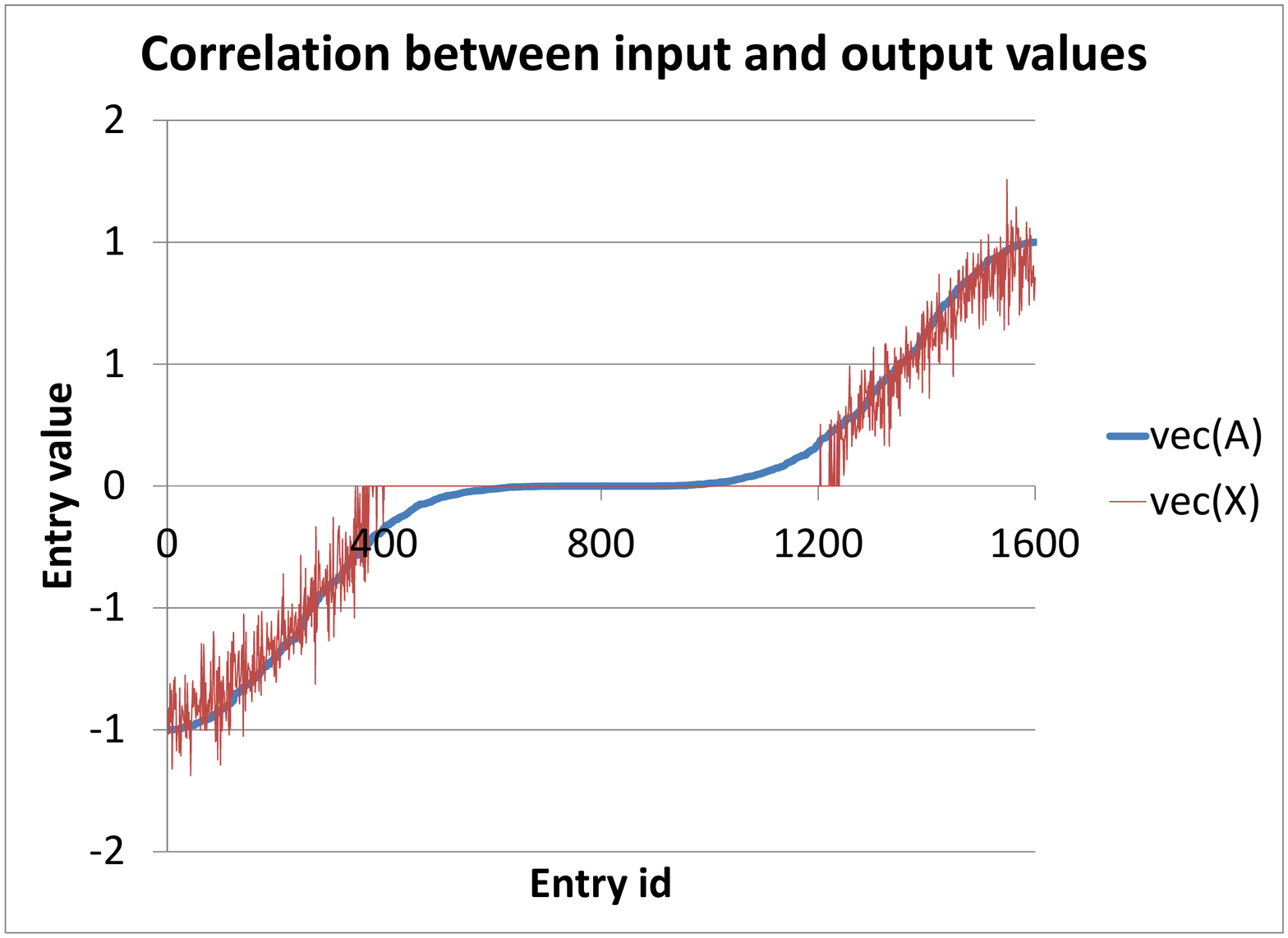}
\caption{Correlation between $A$ and $X$ values after sorting vectorized $A$ and pairing corresponding
$X$ entries with it.}
\label{fig:correlate}
\end{figure} 

\vspace{1cm}

\noindent \textbf{ \large{Acknowledgements}}

This work was partially supported by the US Department of Energy SBIR Grant
DE-FG02-08ER85154.  The author thanks Travis M. Austin, Marian Brezina,
Leszek Demkowicz, Ben Jamroz, Thomas A. Manteuffel, and John Ruge for many
discussions.

\bibliographystyle{elsarticle-num}
\bibliography{01_sparsification}

\end{document}